\documentclass[12pt]{amsart}

\usepackage{amsmath,amsfonts,amssymb}

\newtheorem{theorem}{Theorem}[section]
\newtheorem{corollary}[theorem]{Corollary}
\newtheorem{lemma}{Lemma}
\newtheorem{proposition}[theorem]{Proposition}

\numberwithin{equation}{section}

\makeatletter \@namedef{subjclassname@2010}{%
  \textup{2010} Mathematics Subject Classification}
\makeatother

\frenchspacing

\textwidth=16.5cm \textheight=23cm
\parindent=16pt
\oddsidemargin=-0.5cm \evensidemargin=-0.5cm \topmargin=-0.5cm

\begin{document}

\baselineskip=17pt

\title[A generalization of a theorem of Erd\H{o}s-R\'enyi]{A generalization of a theorem of Erd\H{o}s-R\'enyi \\
to $m$-fold sums and differences}
\author[K.E. Hare]{Kathryn Hare}
\address{Dept. of Pure Mathematics\\
University of Waterloo\\ Waterloo, Ont.\\ Canada N2L 3G1} \email{kehare@uwaterloo.ca}
\thanks{This research is partially supported by NSERC}

\author[S. Yamagishi]{Shuntaro Yamagishi}
\address{Dept. of Pure Mathematics\\
University of Waterloo\\ Waterloo, Ont.\\ Canada N2L 3G1} \email{syamagis@uwaterloo.ca}

\begin{abstract}
Let $m\geq 2$ be a positive integer. Given a set $E(\omega )\subseteq \mathbb{N}$ we define $r_{N}^{(m)}(\omega )$ to be the number of ways to
represent $N\in \mathbb{Z}$ as any combination of sums\textit{\ and} differences of $m$ distinct elements of $E(\omega )$. In this paper, we prove the
existence of a ``thick'' set $E(\omega )$ and a positive constant $K$ such that $r_{N}^{(m)}(\omega )<K$ for all $N\in \mathbb{Z}$. This is a
generalization of a known theorem by Erd\H{o}s and R\'enyi. We also apply our results to harmonic analysis, where we prove the existence of certain
thin sets.
\end{abstract}

\subjclass[2010]{Primary: 11K31, 11B83; Secondary: 43A46}

\keywords{sequences, additive number theory, probabilistic methods, thin sets}

\maketitle

\section{Introduction}

Given a set $S \subseteq \mathbb{N}$, we define $R_S^{m}(n)$ 
to be the number of representations of the form $n = s_1 + ... + s_m \ (s_i \in S)$ and $s_1 \leq ... \leq s_m$.
We say that the
set $S$ is of type $B_m(g)$ if
\begin{equation*}
R_S^{m}(n) \leq g
\end{equation*}
for all $n$. In \cite{Vu}, Vu gives a brief history of the topic, which we paraphrase here. In 1932, Sidon, in connection with his work in Fourier
analysis, investigated power series of type $\sum_{i=1}^{\infty} z^{a_i}$, when $(\sum_{i=1}^{\infty} z^{a_i})^m$ has bounded coefficients \cite{Sid}.
This leads to the study of sets of type $B_m(g)$. One classical question in this area is the following \cite{PS}.

``Let $S$ be a set of type $B_{m}(g)$. How fast can $S(n)$ grow, where $S(n)$ is the number of elements of $S$ not exceeding $n$ ?''

In \cite{ER}, Erd\H{o}s and R\'enyi gave an answer to this question for the case $m=2$. This result was discussed in great detail in the monograph of
Halberstam and Roth \cite{HR}.

\begin{theorem}
[Erd\H{o}s-R\'enyi] \label{thm ER} For any $\varepsilon > 0$, there exists $g = g(\varepsilon)$ and a set $S \subseteq \mathbb{N}$ of type $B_2(g)$
such that
\begin{equation*}
S(n) > n^{\frac12 - \varepsilon}
\end{equation*}
for sufficiently large $n$.
\end{theorem}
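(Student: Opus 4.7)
The plan is to use Erd\H{o}s's probabilistic method. I define a random set $E(\omega)\subseteq\mathbb{N}$ by taking the events $\{n\in E(\omega)\}$ to be mutually independent with probabilities $p_{n}=\min(1,\,c\,n^{-1/2-\delta})$, where $\delta=\delta(\varepsilon)>0$ is chosen strictly smaller than $\varepsilon$ and $c>0$ is a suitable constant. The exponent $1/2+\delta$ is tuned so that the expected density of $E(\omega)\cap [1,n]$ is $\asymp n^{1/2-\delta}$ (which comfortably dominates $n^{1/2-\varepsilon}$), while $p_{n}$ is thin enough to force the representation counts to decay in $N$.

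First I would establish the density bound. Since the indicators are independent, $S(n)=\sum_{k\leq n}\mathbf{1}_{k\in E}$ has mean $\mu_{n}\asymp n^{1/2-\delta}$ and variance at most $\mu_{n}$. A Chebyshev tail estimate along the geometric subsequence $n_{j}=2^{j}$, combined with Borel--Cantelli and the monotonicity of $S$, gives $S(n)\gg\mu_{n}>n^{1/2-\varepsilon}$ almost surely for all sufficiently large $n$.

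Next I would bound $R^{2}_{E}(N)$ via a factorial-moment argument. The decisive combinatorial observation is that distinct unordered pairs $\{a,b\}\neq\{a',b'\}$ summing to $N$ are automatically element-disjoint, so their four indicators are mutually independent. This yields
\begin{equation*}
\mathbb{P}\bigl(R^{2}_{E}(N)\geq g\bigr)\leq \mathbb{E}\!\left[\binom{R^{2}_{E}(N)}{g}\right]\leq \frac{1}{g!}\Big(\sum_{a+b=N}p_{a}p_{b}\Big)^{g}\ll N^{-2\delta g}
\end{equation*}
after inserting the elementary bound $\sum_{a=1}^{N-1}a^{-1/2-\delta}(N-a)^{-1/2-\delta}\ll N^{-2\delta}$. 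Choosing any integer $g>1/(2\delta)$ makes these probabilities summable in $N$, so Borel--Cantelli produces, with probability one, only finitely many integers $N$ with $R^{2}_{E}(N)\geq g$. Removing one element from each of the finitely many surviving bad representations yields a set of type $B_{2}(g)$ without disturbing the density lower bound.

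The main obstacle is the delicate tension in the choice of $p_{n}$: too dense and the representation counts fail to decay, too sparse and $S(n)$ is not large enough. The factorial-moment inequality above is the central estimate; it hinges on the structural fact that the pairs $(a,N-a)$ are parametrised by a single coordinate, which makes the indicator products for distinct pairs summing to $N$ fully independent and lets the $g$-th power of the expected count control the tail.
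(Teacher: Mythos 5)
Your proposal is correct and follows essentially the same probabilistic scheme the paper uses (for the case $m=2$ of its generalization): independent Bernoulli selection with $p_n\asymp n^{-1/2-\delta}$, the observation that distinct pairs summing to $N$ are element-disjoint, the elementary-symmetric-function (factorial moment) bound $\sigma_g\leq\sigma_1^g/g!$, and the convolution estimate $\sum_{a+b=N}p_ap_b\ll N^{-2\delta}$. The only cosmetic difference is at the end: you take $g$ just large enough for summability and then invoke Borel--Cantelli plus a finite deletion, whereas the paper takes $g$ large enough that $\sum_N\mathbb{P}(R^2_E(N)\geq g)<\varepsilon$, so that a positive-probability set of $\omega$ already has no exceptional $N$; both are standard and valid.
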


The result is best possible up to the $\varepsilon $ term in the exponent. Erd\H{o}s-R$\acute{\text{e}}$nyi
used a probabilistic argument, and their proof was
presented in \cite{HR}
in a more rigorous and carefully written form.
This theorem can be generalized from $2$-fold sums to the following theorem
for arbitrary $m$-fold sums, as was noted in \cite{Ha} and \cite{HR} (without proof), and also by Vu who observed that it can be deduced as a
consequence of a more general result proven in \cite{Vu}.

\begin{theorem}
\label{thm m fold} For any positive integer $m\geq 2$ and any $\varepsilon
>0 $, there exists $g=g(\varepsilon ,m)$ and a set $S\subseteq \mathbb{N}$
of type $B_{m}(g)$ such that
\begin{equation*}
S(n)>n^{\frac{1}{m}-\varepsilon }
\end{equation*}
for sufficiently large $n$.
\end{theorem}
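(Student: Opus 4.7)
The plan is to adapt the probabilistic method of Erdős and Rényi to the $m$-fold setting. Fix $\eta \in (0, \min(\varepsilon, 1/m))$, to be chosen later. Define a random set $S = S(\omega) \subseteq \mathbb{N}$ by declaring each integer $k \geq k_0$ to lie in $S$ independently with probability
\[ p_k = c_m \, k^{-1 + 1/m - \eta}, \]
where $c_m > 0$ and $k_0$ are constants chosen suitably. The exponent $-1 + 1/m$ is dictated by the critical balance in the Erdős-Rényi argument, while the extra $-\eta$ provides the polynomial slack used below.

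For the density lower bound, $S(n) = \sum_{k \leq n} \mathbf{1}_{\{k \in S\}}$ is a sum of independent Bernoulli variables with $\mathbb{E}[S(n)] \sim c' n^{1/m - \eta}$ and variance of the same order. A routine Chebyshev/Borel-Cantelli argument along the dyadic subsequence $n_j = 2^j$, combined with the monotonicity of $S(\cdot)$, shows that almost surely $S(n) > n^{1/m - \varepsilon}$ for all sufficiently large $n$.

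For the representation function, the scaling $s_i = n t_i$ yields the mean bound
\[ \mathbb{E}[R_S^m(n)] \leq \sum_{\substack{s_1 + \cdots + s_m = n \\ s_1 \leq \cdots \leq s_m}} p_{s_1} \cdots p_{s_m} = O_m(n^{-m\eta}), \]
so the mean decays polynomially in $n$. The goal is to upgrade this to a uniform pointwise bound on $R_S^m(n)$. Using a concentration inequality for low-degree polynomials in independent Bernoulli variables --- for example Janson's inequality, Kim-Vu polynomial concentration, or a direct $r$-th moment bookkeeping for $R_S^m(n)$ --- one expects a tail estimate of the form $\mathbb{P}(R_S^m(n) > g) = O_g(n^{-\alpha(g,\eta,m)})$ with $\alpha \to \infty$ as $g \to \infty$. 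Choosing $g = g(\varepsilon,m)$ large enough makes the right-hand side summable in $n$; Borel-Cantelli then yields that, almost surely, $R_S^m(n) \leq g$ for all but finitely many $n$. A final alteration removes one element of $S$ from each of the finitely many exceptional representations; since only finitely many elements are deleted, the resulting set still satisfies $S(n) > n^{1/m - \varepsilon}$ for large $n$ and now has $R_S^m(n) \leq g$ for every $n$.

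The main obstacle is the concentration bound in the third paragraph. Because a single integer $k$ can appear as a summand in many different representations of $n$, the indicator products $\prod_i \mathbf{1}_{\{s_i \in S\}}$ are not independent across distinct representations, and the classical Chernoff bound does not apply directly. Handling these dependencies --- either through the clustering term in Janson's inequality, the partial-derivative data in Kim-Vu, or an explicit partition of $r$-tuples of representations according to their pattern of shared summands --- is the technical crux. A clean alternative is to invoke Vu's general result cited in the introduction, which packages exactly this kind of concentration for $m$-fold representation functions.
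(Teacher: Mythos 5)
Your overall architecture---a random set with $p_k \asymp k^{-1+1/m-\eta}$, density via Chebyshev and Borel--Cantelli, and a mean representation count decaying like $n^{-m\eta}$---matches the scheme used in this paper (which, note, does not actually prove Theorem~\ref{thm m fold}: it quotes it as known and proves a signed-sum variant by exactly this scheme). But the step you yourself flag as the ``technical crux'' is a genuine gap, and it is the entire content of the theorem: you assert a tail bound $\mathbb{P}(R_S^m(n) > g) = O_g(n^{-\alpha(g,\eta,m)})$ with $\alpha \to \infty$ and defer it to ``Janson, Kim--Vu, or moment bookkeeping'' without carrying any of these out. Janson's inequality is in fact the wrong tool: it controls the \emph{lower} tail of a sum of indicators of monotone events, whereas you need an \emph{upper}-tail bound, which is precisely the regime where such sums can fail to concentrate (the ``infamous upper tail''). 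Kim--Vu could be made to work but requires controlling all partial derivatives of the polynomial $R_S^m(n)$ in the Bernoulli variables; that is a computation, not a citation. Falling back on ``invoke Vu's theorem'' concedes the point.

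The elementary route, due to Erd\H{o}s--R\'enyi for $m=2$ and carried out for general $m$ in this paper, supplies exactly the missing step and is worth internalizing. One inducts on $m$: if $n$ has $K$ representations as an $m$-fold sum from $S$, then a fixed element $a\in S$ can occur in at most a bounded multiple of the number of $(m-1)$-fold representations of $n-a$, which is at most $g_{m-1}$ by the inductive hypothesis; hence from the $K$ representations one can extract $L \gg_{m,g_{m-1}} K$ representations that are pairwise disjoint in their entries. For disjoint representations the indicator products are genuinely independent, so the probability that $L$ of them occur simultaneously is bounded by the $L$-th elementary symmetric function $\sigma_L$ of the quantities $p_{s_1}\cdots p_{s_m}$ taken over all representations of $n$, and by Lemma~\ref{lemma 4},
\[
\mathbb{P}\bigl(R_S^m(n) \geq K\bigr) \;\leq\; \sigma_L \;\leq\; \frac{\sigma_1^{\,L}}{L!} \;\leq\; \frac{\bigl(C_m\, n^{-m\eta}\bigr)^{L}}{L!}.
\]
Summing over $n$ and taking $L$ (hence $g$) large makes the total failure probability less than $1$, which also lets you dispense with the final alteration step. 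Without this disjointification-plus-symmetric-function argument (or a fully executed polynomial concentration estimate), your proposal is an outline rather than a proof.
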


Given a set $E(\omega )\subseteq \mathbb{N}$ we define $r_{N}^{(m)}(\omega )$ to be the number of ways to represent $N\in \mathbb{Z}$ as any
combination of sums \textit{and} differences of $m$ distinct elements of $E(\omega )$. In this paper, we prove the existence of a ``thick'' set
$E(\omega )$ and a positive constant $K$ such that $r_{N}^{(m)}(\omega )<K$ for all $N\in \mathbb{Z}$. Hence, our theorem is a (partial) generalization
of Theorem \ref {thm m fold}. The caveat here is that with $r_{N}^{(m)}(\omega )$ we do not allow repeated elements in the representation, for
otherwise every infinite set will admit integers $N$ with $r_{N}^{(m)}(\omega )=\infty $. (Take $N=0$ when $m$ is even, for instance.)

Our main result is the following:

\begin{theorem}
\label{thm main intger} For any positive integer $m\geq 2$ and $\varepsilon
>0$, there exists $K=K(\varepsilon ,m)$ and a set $E(\omega )\subseteq
\mathbb{N}$ such that
\begin{equation*}
r_{N}^{(m)}(\omega )<K
\end{equation*}
for all $N\in \mathbb{Z}$, and
\begin{equation*}
card\left\{ E(\omega )\bigcap \{1,2,\dots,n\}\right\} \gg n^{\frac{1}{m} -\varepsilon }
\end{equation*}
for sufficiently large $n$.
\end{theorem}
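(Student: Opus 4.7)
The plan is to follow the probabilistic method of Erd\H{o}s--R\'enyi. Fix a small auxiliary $\delta>0$ with $\delta<\varepsilon$, and define the random set $E(\omega)\subseteq\mathbb{N}$ by including each positive integer $k$ independently with probability
\begin{equation*}
p_{k}=\min\bigl(1,\,c\,k^{-1+1/m-\delta}\bigr),
\end{equation*}
where $c=c(\varepsilon,m)$ is an absolute constant chosen at the end. The expected size of $E(\omega)\cap[1,n]$ is then $\asymp n^{1/m-\delta}$, and a standard Chernoff bound applied along a geometric sequence $n_{j}=2^{j}$, combined with Borel--Cantelli and monotonicity, shows that almost surely $|E(\omega)\cap[1,n]|\gg n^{1/m-\delta}\geq n^{1/m-\varepsilon}$ for all large $n$.

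\textbf{Bounding the expected number of representations.} Write $r_{N}^{(m)}(\omega)=\sum_{T}\prod_{k\in T}\mathbf{1}_{k\in E(\omega)}$, where $T$ ranges over all ordered $m$-tuples of distinct positive integers together with a sign pattern $\vec{\epsilon}\in\{\pm 1\}^{m}$ satisfying $\sum_{i}\epsilon_{i}a_{i}=N$. By symmetry one may reduce to $N\geq 0$, and then
\begin{equation*}
\mathbb{E}[r_{N}^{(m)}(\omega)]\leq\sum_{\vec{\epsilon}}\sum_{\sum\epsilon_{i}a_{i}=N}\prod_{i=1}^{m}p_{a_{i}}.
\end{equation*}
The central technical step is to prove that this quantity is bounded by a constant $C=C(\varepsilon,m)$ independent of $N$. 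Unlike the pure-sums case where all $a_{i}$ are confined to $[1,N]$, the presence of differences allows the variables to be arbitrarily large, and the $\delta$ surplus in the exponent is precisely what makes the corresponding integral converge. I would parametrise by fixing $a_{m}$ through the linear relation and estimating the remaining $(m-1)$-fold sum by a dyadic decomposition into blocks of size $\sim 2^{k}$, separating those variables smaller than $|N|$ from those above $|N|$. Within each block the summand factors, so the estimate reduces to a finite product of one-dimensional tail estimates, and the geometric decay supplied by the $\delta$ surplus yields a uniform bound.

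\textbf{Concentration and conclusion.} Since $r_{N}^{(m)}(\omega)$ is a multilinear polynomial of degree $m$ in the independent Bernoulli variables $\mathbf{1}_{k\in E(\omega)}$, I would invoke a Kim--Vu-type polynomial concentration inequality (or compute sufficiently high moments directly) to upgrade the expectation bound into a tail bound of the form
\begin{equation*}
\mathbb{P}\bigl[r_{N}^{(m)}(\omega)>K\bigr]=O\bigl((1+|N|)^{-2}\bigr)
\end{equation*}
for a suitable $K=K(\varepsilon,m)$. Summability over $N\in\mathbb{Z}$ and Borel--Cantelli then imply that almost surely $r_{N}^{(m)}(\omega)\leq K$ for all but finitely many $N$. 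The finitely many offending representations can be neutralised by deleting a finite subset of $E(\omega)$, which does not affect the asymptotic density. Intersecting this almost sure event with the density event from the first paragraph produces an $\omega$ for which both conclusions hold.

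\textbf{Main obstacle.} The crux is the uniform estimate on $\mathbb{E}[r_{N}^{(m)}(\omega)]$ when differences are allowed: without the $\delta$-buffer the governing integral $\int\prod a_{i}^{-1+1/m}$ restricted to the hyperplane $\sum\epsilon_{i}a_{i}=N$ in the positive orthant diverges, and even with the buffer a careful case analysis is required to control contributions where some $a_{i}$ dwarf $|N|$ and others lie well below it. A secondary difficulty is verifying the hypotheses of a polynomial concentration inequality uniformly in $N$, since the partial-derivative expectations that appear in the Kim--Vu bound correspond to lower-order representation counts which themselves require analogous uniform estimates.
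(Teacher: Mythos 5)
Your overall strategy coincides with the paper's: the same random set (your $p_k\asymp k^{-(\frac{m-1}{m}+\delta)}$ is exactly the paper's $\alpha_n=n^{-s}$ with $s=\frac{m-1}{m}+\theta$), a moment bound on the number of signed representations, Borel--Cantelli over $N$, and a law-of-large-numbers argument for the density. There is, however, a genuine gap at the central step. You set out to prove only that $\mathbb{E}[r_N^{(m)}(\omega)]\leq C$ uniformly in $N$, and then assert a tail bound $\mathbb{P}[r_N^{(m)}>K]=O((1+|N|)^{-2})$. A uniform, non-decaying bound on the mean cannot produce a tail probability that decays in $|N|$: any moment or Kim--Vu computation fed only with $\mathbb{E}[r_N^{(m)}]\leq C$ returns a bound independent of $N$ (heuristically $r_N^{(m)}$ is close to Poisson with parameter $\mathbb{E}[r_N^{(m)}]$, so constant mean means constant tails), which is not summable over $N\in\mathbb{Z}$, and your Borel--Cantelli step collapses. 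What saves the argument -- and what the paper proves as Lemma \ref{lemma 3} -- is that the first moment actually decays polynomially: in any representation $\sum_i\epsilon_i a_i=N$ some $a_i\geq|N|/m$, and a dyadic summation over the largest variable gives $\sum_{\sum\epsilon_ia_i=N}\prod_ip_{a_i}\ll|N|^{-m\delta}$ for $N\neq0$. You need this stronger, $N$-dependent estimate, so that the $L$-th factorial moment gives $\mathbb{P}[r_N^{(m)}\geq K]\ll(C|N|^{-m\delta})^{L}/L!$ with $L\asymp K$, which is summable once $m\delta L>1$.

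The ``secondary difficulty'' you flag -- that the derivative expectations in Kim--Vu are lower-order representation counts requiring their own uniform (in fact decaying) estimates -- is actually the main new idea of the paper, not a side issue; and Kim--Vu is additionally awkward here because, differences being allowed, $r_N^{(m)}$ is a polynomial in infinitely many Bernoulli variables. The paper's substitute is elementary and worth knowing: induct on $m$. If $K$ representations of $\chi_N$ are present and more than $2K_{m-1}$ of them share a common element $\chi_a$, then cancelling $\chi_a$ from each produces more than $K_{m-1}$ signed $(m-1)$-fold representations of $\chi_N+\chi_a$ or of $\chi_N-\chi_a$, contradicting the inductive bound on the event $\Omega_{m-1}$. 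Hence one may extract $L\gg K/(mK_{m-1})$ pairwise disjoint representations, and the probability of that event is an $L$-th elementary symmetric function of the products $\prod_i p_{a_i}$, which Lemma \ref{lemma 4} bounds by $\sigma_1^{L}/L!$. This replaces the concentration inequality entirely and is precisely where the factor $|N|^{-m\delta L}$, with $L$ as large as one likes, enters. If you insist on the Kim--Vu route you must still prove decaying-in-$N$ bounds for all the derivative expectations, which amounts to the same induction.
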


We will also prove analogous results for $\bigoplus_{0}^{\infty }\mathbb{Z} (q)$, $\bigoplus_{0}^{\infty }\mathbb{Z}(q_{n})$ for $\{q_{n}\}$ increasing
integers, and $\mathbb{Z}(q^{\infty })$ where $q$ is prime. Here $\mathbb{Z}(m)$ denotes the
cyclic group of order $m \in \mathbb{N}$ and $\mathbb{Z}(q^{\infty })$ is the group of
all $q^n$-th roots of unity. The notation  $\bigoplus_{0}^{\infty }$ means the countable direct sum.

In section \ref{HA}, we give an application of our results to harmonic analysis.
A subset $E$ of a discrete abelian group with dual group $X$ is called a
$\Lambda(q)$ set (for some $q > 2$ ) if whenever $f \in L^2(X)$ and
the Fourier transform of $f$ is non-zero only on $E$, then $f \in L^q(X)$.
A completely bounded $\Lambda(q)$ set is defined in a similar spirit, but is more complicated
and we refer the reader to section \ref{HA} for the definition.
We prove that for any integer $m\geq 2$ and $\varepsilon >0$, every
infinite discrete abelian group contains a set that is completely bounded $\Lambda (2m)$, but not $\Lambda (2m+\varepsilon )$.

Notation. We use $\ll$ and $\gg$ to denote Vinogradov's well-known notation.

\section{Preliminaries}

Let $G$ be any one of $\mathbb{Z}$, $\bigoplus_{0}^{\infty }\mathbb{Z}(q)$ where $q\in \mathbb{N}$, $\mathbb{Z}(q^{\infty })$ where $q$ is prime, or
$\bigoplus_{0}^{\infty }\mathbb{Z}(q_{n})$ where $\{q_{n}\}$ are strictly increasing, odd integers. (The case when not all $q_{n}$ are odd requires a
notational adaptation that we will leave for the reader.) We define $ G^{\prime }$ to be $\mathbb{N}$ if $G=\mathbb{Z}$ and $G^{\prime }=G\backslash
\{0\}$ otherwise.

If $\psi \in \bigoplus_{0}^{\infty }\mathbb{Z}(q)$, then $\psi =(\psi _{i})_{i=0}^{\infty }$, where each $\psi _{i}\in \mathbb{Z}(q)$ and all but
finitely many $\psi _{i}=0$. Given $\psi =(\psi _{i})_{i=0}^{\infty }$, we call the degree of $\psi $ (or $\deg \psi $ for short) the maximum $i$ such
that $\psi _{i}\not=0$. We also let $\deg 0=-\infty $. By choosing the representative $0\leq \psi _{i}<q$ for each $i$, we can identify $\psi \not=0 $
with the natural number $\psi _{0}+\psi _{1}q+\cdots+\psi _{d}q^{d}$, where $d=\deg \psi .$ This gives a one to one correspondence between $
\bigoplus_{0}^{\infty }\mathbb{Z}(q)$ and $\mathbb{N}\cup \{0\}$. Notice there are $q^{d+1}-q^{d}$ elements of degree $d$ for each $d\geq 0$.

If $\psi \in \mathbb{Z}(q^{\infty })$ and $\psi \not=0$, then $\psi $ is the argument of a primitive $q^{M}$-th root of unity for a unique choice of
$M$, in other words $\psi =j/q^{M}$ where $j\in \{1,2,\dots,q^{M}-1\}$ and $q\nmid j $. We let $\deg \psi =M-1$ and $\deg 0=-\infty $. Again, for each
$d\geq 0$, there are $q^{d+1}-q^{d}$ elements of degree $d$. We can identify $\mathbb{ Z}(q^{\infty })$ with $\mathbb{N}\cup \{0\}$ by assigning $0$ to
$0$, and elements of degree $d$ to $\{q^{d},q^{d}+1,\dots,q^{d+1}-1\}$ for each $d\geq 0$ in the natural way.

In the case that $G=\bigoplus_{0}^{\infty }\mathbb{Z}(q_{n})$ where $q_{n}$ are strictly increasing, odd integers we choose representatives from $
\{-(q_{n}-1)/2,\dots,(q_{n}-1)/2\}$ for each $\mathbb{Z}(q_{n})$. We define the degree of $\psi =(\psi _{i})_{i=0}^{\infty }$ in the same manner as for
$ \bigoplus_{0}^{\infty }\mathbb{Z}(q)$. We will identify the $2q_{0}\cdots q_{d-1}$ characters $\psi =(\psi _{i})$ which have degree $d$ and $d$-th
coordinate $\psi _{d}=\pm r,r\in \mathbb{N}$, with the integers in the interval $[(2r-1)q_{0}\cdots q_{d-1},(2r+1)q_{0}\cdots q_{d-1})$ (where, if
$d=0$, we understand $q_{0}\cdots q_{d-1}$ $=1$). Hence, the characters of degree $d$ are assigned to integers in $\{q_{0}\cdots
q_{d-1},\dots,q_{0}\cdots q_{d}-1\}.$

Thus, for any of the four choices of $G$ above, we have $G^{\prime }=\{\chi _{n}\}_{n=1}^{\infty }$ where $\chi _{n}$ is the non-zero element of $G$
uniquely associated with the integer $n$.

Given real numbers $\alpha _{n}$ with $0<\alpha _{n}<1$, we let $Y_{n}$, $n=1,2,\dots,$ be independent Bernoulli random variables defined on a
probability space $(\Omega ,M,P),$ with $P(Y_{n}=1)=\alpha _{n}$ and $P(Y_{n}=0)=1-\alpha _{n}$. For each of the groups $G$ we define random subsets
\begin{equation*}
E(\omega )=E(\omega ,G)=\{\chi _{n}\in G^{\prime }:Y_{n}(\omega )=1\}.
\end{equation*}
Throughout the paper we will be specifying a positive number $s$ and putting
\begin{equation}
\alpha _{n}=n^{-s}\text{ when }G=\mathbb{Z}\text{, }\bigoplus_{0}^{\infty } \mathbb{Z}(q)\text{ or }\mathbb{Z}(q^{\infty })  \label{prob1}
\end{equation}
or
\begin{equation}
\begin{split}
\alpha _{n} & =
\begin{cases}
n^{-s} & \mbox{if  }q_{0}\cdots q_{d-1}<n\leq (2\left\lfloor \frac{q_{d}}{8m}\right\rfloor +1)q_{0}\cdots q_{d-1}\text{ and } q_{d}>8m \\ 0 &
\text{else}
\end{cases} \\
& \qquad \text{when }G=\bigoplus_{0}^{\infty }\mathbb{Z}(q_{n}).  \label{prob2}
\end{split}
\end{equation}
Note we have $\alpha _{n}\leq n^{-s}$ for all $n$ in this case, as well.

Let $m\geq 2$ be a positive integer. For $t\in \{0,1,\dots,m\}$, we define
\begin{align*}
r_{N,t}^{(m)}(\omega ) :=card & \left\{ (a_{1},\dots,a_{m}):\chi _{a_{i}}\in E(\omega ),\sum_{i=1}^{t}\chi _{a_{i}}-\sum_{i=t+1}^{m}\chi _{a_{i}}= \chi
_{N}, \right.\\ & \qquad a_{1}<\cdots <a_{t},a_{t+1}<\cdots <a_{m},a_{i}\not=a_{j}\text{ for }i\not=j\Bigg\} .
\end{align*}
For clarification, we note that when $t=0$ and $t=m$, we mean to consider the expressions $-\sum_{i=1}^{m}\chi _{a_{i}}=\chi _{N}$ and
$\sum_{i=1}^{m}\chi _{a_{i}}=\chi _{N}$, respectively, with $a_{1}<\cdots<a_{m}$. We also define
\begin{equation}
r_{N}^{(m)}(\omega ):=\sum_{t=0}^{m}r_{N,t}^{(m)}(\omega ).  \label{def r}
\end{equation}
Lastly, we recall a fact about elementary symmetric functions that will be useful later.

\begin{lemma}
\label{lemma 4} Let $\{y_{k}\}_{k>0}$ be a sequence of non-negative numbers. For each $d\in \mathbb{N}$, we write
\begin{equation*}
\sigma _{d}=\sum_{k_{1}<\cdots<k_{d}}y_{k_{1}}y_{k_{2}}\cdots y_{k_{d}},
\end{equation*}
in other words, $\sigma _{d}$ is the $d$-th elementary symmetric function of the $y_{k}$. Then,
\begin{equation*}
\sigma _{d}\leq \frac{\sigma _{1}^{d}}{d!}.
\end{equation*}
\end{lemma}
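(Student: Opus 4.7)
The plan is to prove the inequality by expanding the $d$-th power of $\sigma_1 = \sum_k y_k$ via the distributive law and comparing it to $\sigma_d$ term by term. This is a classical and short argument, so the ``hard part'' is really only bookkeeping; no probabilistic or asymptotic tools are needed.

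First I would write
\begin{equation*}
\sigma_1^d = \Bigl(\sum_{k>0} y_k\Bigr)^d = \sum_{(k_1,\dots,k_d)} y_{k_1} y_{k_2} \cdots y_{k_d},
\end{equation*}
where the sum on the right ranges over \emph{all} ordered $d$-tuples of positive integer indices (with repetitions allowed). Split this sum into two pieces: the tuples in which all $d$ indices are distinct, and the tuples in which at least two indices coincide. Since every $y_k \geq 0$, every term is non-negative and throwing away the ``non-distinct'' piece only decreases the sum. This gives
\begin{equation*}
\sigma_1^d \geq \sum_{\substack{(k_1,\dots,k_d) \\ k_i \text{ pairwise distinct}}} y_{k_1} \cdots y_{k_d}.
\end{equation*}

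Next I would observe that each unordered set $\{k_1,\dots,k_d\}$ of $d$ distinct indices contributes to the right-hand sum exactly $d!$ times, once for each permutation, and each permutation yields the same product $y_{k_1}\cdots y_{k_d}$. Therefore the right-hand sum equals $d!\,\sigma_d$, and dividing by $d!$ yields the desired inequality
\begin{equation*}
\sigma_d \leq \frac{\sigma_1^d}{d!}.
\end{equation*}
The only subtlety worth flagging is the use of $y_k \geq 0$ when discarding the repeated-index terms; without non-negativity the inequality can fail, but that hypothesis is given.
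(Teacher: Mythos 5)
Your proof is correct and complete: the expansion of $\sigma_1^d$ over all ordered $d$-tuples, the discarding of tuples with repeated indices (valid precisely because $y_k\geq 0$), and the $d!$-fold overcounting of each distinct index set together give exactly the claimed bound, and non-negativity also justifies all the rearrangements even when the index set is infinite. The paper does not prove this lemma at all --- it simply cites Halberstam and Roth \cite[p.\ 147, Lem.\ 13]{HR} --- and your argument is the standard one found there, so there is nothing to compare beyond noting that you have supplied the proof the paper omits.
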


\begin{proof}
See \cite[p 147, Lem 13]{HR}.
\end{proof}

\section{$m$-fold sums and differences}

Our main result, Theorem \ref{thm main intger}, follows fairly easily from Theorem \ref{theorem 6}. This will be proven by induction, with the base
case taken care of in Cor \ref{corollary 5} (following Prop \ref{prop 4}). Unless we specify otherwise in the statement of the result, in this section
$G$ can be considered to be any one of $\mathbb{Z}$, $\bigoplus_{0}^{\infty }\mathbb{ Z}(q)$, $\mathbb{Z}(q^{\infty })$ or $\bigoplus_{0}^{\infty
}\mathbb{Z} (q_{n})$.

We begin with some observations utilized in the proof of the next lemma: Suppose $\chi _{n}=\sum_{i=1}^{m}\varepsilon _{i}\chi _{n_{i}}$, where
$\varepsilon _{i}=\pm 1$, $n_{i}$ are distinct and all $\alpha _{n_{i}}\neq 0$.

When $G=\mathbb{Z}$, $\max_{1\leq i\leq m}\left\vert n_{i}\right\vert \geq \left\vert n\right\vert /m$.

When $G=\bigoplus_{0}^{\infty }\mathbb{Z}(q)$, $\mathbb{Z}(q^{\infty })$ (or $\bigoplus_{0}^{\infty }\mathbb{Z}(q_{n})$) and $\deg \chi _{n}=d$, then
$q^{d}\leq n\leq q^{d+1}$ (or $q_{0}\cdots q_{d-1}\leq n\leq q_{0}\cdots q_{d}$). Since $\deg (\chi _{a}\pm \chi _{b})\leq \max \{\deg \chi _{a},\deg
\chi _{b}\}$, it follows that $\max_{1\leq i\leq m}\deg \chi _{n_{i}}\geq \deg \chi _{n}$.

When $G=\bigoplus_{0}^{\infty }\mathbb{Z}(q_{n})$ and $\chi _{n}=(\chi _{n,j})_{j=1}^{\infty }$ with degree $d\geq 0$, then
\begin{equation*}
n\leq (2\left\vert \chi _{n,d}\right\vert +1)q_{0}\cdots q_{d-1}.
\end{equation*}
If $\max_{1\leq i\leq m}\deg \chi _{n_{i}}=\deg \chi _{n}$, (which can occur only if $q_{d}>8m$ as we are assuming all $\alpha _{n_{i}}\neq 0$) then
the modulus of the $d$-th coordinate of $\chi _{n_{i}}$ is at least $\left\lfloor \left\vert \chi _{n,d}\right\vert /m\right\rfloor $ for some $\chi
_{n_{i}}$ of maximal degree. This is because addition in the $d$-th coordinate on the terms where $\alpha _{n_{i}}\neq 0$ is the same as in
$\mathbb{Z}$ (recall (\ref{prob2})). Thus, (whether $\max_{1\leq i\leq m}\deg \chi _{n_{i}}>\deg \chi _{n}$ or $\max_{1\leq i\leq m}\deg \chi
_{n_{i}}=\deg \chi _{n}$) there must be some $i$ such that
\begin{equation*}
n_{i}\geq (2\left\lfloor \left\vert \chi _{n,d}\right\vert /m\right\rfloor -1)q_{0}\cdots q_{d-1}\geq \frac{\left\vert \chi _{n,d}\right\vert
}{m}q_{0}\cdots q_{d-1}.
\end{equation*}

\begin{lemma}
\label{lemma 3} Given $m\geq 2$, we let $s=\frac{m-1}{m}+\theta $ where $0<\theta <\frac{1}{m}$, and let $\alpha _{n}$ be as in (\ref{prob1}) and
(\ref{prob2}). Fix $\varepsilon _{1},\varepsilon _{2},\dots,\varepsilon _{m}\in \{\pm 1\}$. We have
\begin{equation}
\sum_{\overset{(n_{1},\dots,n_{m})\in \mathbb{N}^{m}}{\sum_{i=1}^{m} \varepsilon _{i}\chi _{n_{i}}=\chi _{n}}}\alpha _{n_{1}}\cdots\alpha _{n_{m}}\leq
\
\begin{cases}
C_{m}, & \mbox{if }n=0, \\ C_{m}\frac{1}{\left\vert n\right\vert ^{m\theta }}, & \mbox{if }n\not=0,
\end{cases}
\label{eqn lemma 3-2}
\end{equation}
where $C_{m}$ is a positive constant dependent only on $G$, $m$ and $s$.
\end{lemma}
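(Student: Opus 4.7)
The plan is to prove the stronger statement by induction on the number of terms $k$ from $1$ to $m$: for fixed $s=(m-1)/m+\theta$ and signs $\varepsilon_1,\dots,\varepsilon_k\in\{\pm1\}$, the $k$-fold analogue
\[
T_k(\chi_N):=\sum_{\substack{(n_1,\dots,n_k)\in\mathbb{N}^k\\ \sum\varepsilon_i\chi_{n_i}=\chi_N}}\alpha_{n_1}\cdots\alpha_{n_k}
\]
satisfies $T_k(\chi_N)\leq C_k\, N^{-\beta_k}$ for $N\neq 0$ and $T_k(0)\leq C_k$, where $\beta_k:=ks-(k-1)$. Since $\beta_m=m\theta$, taking $k=m$ gives the lemma. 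The parameter inequalities that make the induction close up are: $s<1$ (so the partial sums $\sum_{n\leq X}n^{-s}\ll X^{1-s}$); $\beta_k\in(0,1)$ for every $k\leq m$ (follows from $s<1$ together with $s>(k-1)/k$); and $s+\beta_k>1$ for $k\leq m-1$ (equivalent to $s>k/(k+1)$, which holds since $s>(m-1)/m$).

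The base case $k=1$ is immediate: the equation $\varepsilon_1\chi_{n_1}=\chi_N$ has at most one solution $n_1\in\mathbb{N}$, with $n_1$ comparable to $|N|$ under the identification of $G$ with $\mathbb{N}\cup\{0\}$, so $\alpha_{n_1}\leq n_1^{-s}\ll N^{-s}=N^{-\beta_1}$; for $N=0$ no $n_1\geq1$ works. For the inductive step I peel off the last variable,
\[
T_{k+1}(\chi_N)=\sum_{n\geq1}\alpha_n\, T_k\bigl(\chi_N-\varepsilon_{k+1}\chi_n\bigr),
\]
apply the hypothesis with $N'(n)$ denoting the integer index of $\chi_N-\varepsilon_{k+1}\chi_n$, and split the range of $n$ into three regimes according to whether $n$ is small, comparable to, or large relative to $|N|$. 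In each regime, combining $\alpha_n\leq n^{-s}$ with the inductive bound $T_k(\chi_{N'})\leq C_k(N')^{-\beta_k}$ and the appropriate partial-sum estimate yields a contribution of order $N^{1-s-\beta_k}=N^{-\beta_{k+1}}$. The isolated case $N'(n)=0$ contributes at most $C_k\alpha_n\ll N^{-s}\leq N^{-\beta_{k+1}}$ (using $\beta_{k+1}\leq s$, which follows from $s\leq 1$). The case $N=0$ is handled analogously: $T_{k+1}(0)=\sum_n\alpha_n T_k(-\varepsilon_{k+1}\chi_n)$, and since the index of $-\chi_n$ is comparable to $n$, this sum is dominated by $\sum_n n^{-s-\beta_k}$, which converges.

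The main obstacle is implementing the three-regime estimate uniformly across the four possible groups, since the identifications $G\leftrightarrow\mathbb{N}\cup\{0\}$ differ. The key technical input is exactly the observations immediately preceding the lemma: they guarantee $\max_i n_i\gg|N|$ and, more importantly, tell us how the index of $\chi_N-\varepsilon\chi_n$ compares to $|N|$ and $n$ according to their relative degrees. For the cyclic-group cases one works at the level of the degree decomposition, where each degree-$d$ block contains $\asymp q^d$ (resp.\ $\asymp q_0\cdots q_d$) elements all with index in that block, so the relevant sums $\sum_{n\leq q^D}n^{-s}\asymp q^{D(1-s)}$ and $\sum_{n\geq q^D}n^{-s-\beta_k}\asymp q^{D(1-s-\beta_k)}$ behave identically to the $\mathbb{Z}$ case. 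The small restriction on the support of $\alpha_n$ in the $\bigoplus\mathbb{Z}(q_n)$ case only shrinks the sums, so the same bounds apply.
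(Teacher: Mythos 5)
Your argument is correct, but it takes a genuinely different route from the paper's. The paper proves the bound in one shot: it decomposes the solution set according to the degree (equivalently, the dyadic size) of the \emph{largest} variable, bounds that variable's weight pointwise (using the pre-lemma observation that some $n_i$ must satisfy $n_i\gg |n|/m$, resp.\ have degree at least $\deg\chi_n$), and then simply \emph{discards the constraint} on the remaining $m-1$ variables, estimating their contribution by $\bigl(\sum_{j\le \text{cutoff}}\alpha_j\bigr)^{m-1}\ll(\text{cutoff})^{(1-s)(m-1)}$; summing the resulting geometric series gives $\ll|n|^{-m\theta}$ directly, with no induction. You instead peel off one variable at a time and induct on $k$, carrying the intermediate exponents $\beta_k=ks-(k-1)$ and closing each step with the classical convolution estimate $\sum_n n^{-s}(N'(n))^{-\beta_k}\ll N^{1-s-\beta_k}$. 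Your bookkeeping is right ($\beta_m=m\theta$, and $\beta_k\in(0,1)$, $s+\beta_k>1$ for $k\le m-1$ all follow from $(m-1)/m<s<1$), and your route buys the intermediate bounds $T_k(\chi_N)\ll N^{-\beta_k}$ for free, which in the paper's setup would require reapplying the lemma with $m$ replaced by $k$. The one place your sketch is thinner than ``identical to the $\mathbb{Z}$ case'' is the middle regime $n\asymp N$ for $G=\bigoplus_0^\infty\mathbb{Z}(q_j)$: there a degree-$d$ block has length $q_0\cdots q_d$, which can exceed $N$ by the unbounded factor $q_d/|\chi_{N,d}|$, so injectivity of $n\mapsto N'(n)$ alone does not confine the image to $\{1,\dots,O(N)\}$. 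You must also use the no-wraparound property of the top coordinate on the support of $\alpha$ (addition of the $d$-th coordinates behaves as in $\mathbb{Z}$, so $|\chi_{N',d}|\le|\chi_{N,d}|+|\chi_{n,d}|\ll|\chi_{N,d}|$ and hence $N'\ll N$). That is exactly the content of the remarks preceding the lemma that you cite as your key input, so the three-regime estimate does go through in all four groups.
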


\begin{proof}
We only give the proof for $G=\bigoplus_{0}^{\infty }\mathbb{Z}(q)$ and $\bigoplus_{0}^{\infty }\mathbb{Z}(q_{n})$ as the other cases can be obtained
by similar calculations. In both cases we use the fact that $\sum_{n\leq A}n^{-s}\ll A^{1-s}$.

Case $G=\bigoplus_{0}^{\infty }\mathbb{Z}(q)$: If $n\not=0$, let $t_{0}\geq 0 $ be such that $n\in \lbrack q^{t_{0}},q^{t_{0}+1})$. If $n=0$, we let
$t_{0}=0$. For either case, we have $\max_{1\leq i\leq m}\deg \chi _{n_{i}}\geq t_{0}$ for any choice of $(n_{1},\dots,n_{m})$ in the summand of
(\ref{eqn lemma 3-2}). Therefore, we can simplify and bound the sum (\ref {eqn lemma 3-2}) as follows,
\begin{align}
\sum_{\overset{(n_{1},\dots,n_{m})\in \mathbb{N}^{m}}{\sum_{i=1}^{m} \varepsilon _{i}\chi _{n_{i}}=\chi _{n}}}\alpha _{n_{1}}\cdots\alpha _{n_{m}}
&\leq \sum_{t=t_{0}}^{\infty }\sum_{\overset{\sum_{i=1}^{m}\varepsilon _{i}\chi _{n_{i}}=\chi _{n}}{\max {n_{i}}\in \lbrack q^{t},q^{t+1})}}\alpha
_{n_{1}}\cdots\alpha _{n_{m}}  \label{calc lemma 3-2} \\ &\leq \sum_{t=t_{0}}^{\infty }\frac{1}{q^{ts}}\sum_{\overset{n_{i}<q^{t+1}}{ 1\leq i<m}}\alpha
_{n_{1}}\cdots\alpha _{n_{m-1}}  \notag \\ &\leq \sum_{t=t_{0}}^{\infty }\frac{1}{q^{ts}}\left( \sum_{n<q^{t+1}}\alpha _{n}\right) ^{m-1}.  \notag
\end{align}
As $\alpha _{n}\leq n^{-s}$ we deduce that
\begin{equation*}
\sum_{\overset{(n_{1},\dots,n_{m})\in \mathbb{N}^{m}}{\sum_{i=1}^{m} \varepsilon _{i}\chi _{n_{i}}=\chi _{n}}}\alpha _{n_{1}}\cdots\alpha _{n_{m}}\leq
C_{m}q^{-t_{0}m\theta },
\end{equation*}
which is equivalent to the inequality we desired to show.

Case $G=\bigoplus_{0}^{\infty }\mathbb{Z}(q_{n})$:  Since $\alpha _{n_{1}}\cdots\alpha _{n_{m}}\neq 0$ if and only if all $\alpha _{n_{j}}\neq 0$, the
comments preceding the statement of the lemma imply that if $\deg \chi _{n}=d\geq 0$, (the case $n=0$ will be left for the reader) we may rewrite the
sum as
\begin{align*}
& \sum_{\overset{(n_{1},\dots,n_{m}) \in \mathbb{N}^{m}}{\sum_{i=1}^{m} \varepsilon _{i}\chi _{n_{i}}=\chi _{n}}} \alpha _{n_{1}}\cdots\alpha _{n_{m}}
\\ & \hspace*{.5in}=\sum_{k=0}^{\infty }\sum_{\overset{\sum_{i=1}^{m}\varepsilon _{i}\chi _{n_{i}}=\chi _{n}}{\max n_{i}\in \lbrack
2^{k},2^{k+1})q_{0}\cdots q_{d-1}\left\vert \chi _{n,d}\right\vert /m}}\alpha _{n_{1}}\cdots\alpha _{n_{m}} \\ & \hspace*{.5in}\leq \sum_{k=0}^{\infty
}\left( 2^{k}q_{0}\cdots q_{d}\left\vert \chi _{n,d}\right\vert /m\right) ^{-s} \left( \sum_{j\leq 2^{k+1}q_{0}\cdots q_{d-1}\left\vert \chi
_{n,d}\right\vert /m}\alpha _{j}\right) ^{m-1} \\ & \hspace*{.5in}\ll \sum_{k=0}^{\infty }\left( 2^{k}q_{0}\cdots q_{d-1}\left\vert \chi
_{n,d}\right\vert /m\right) ^{-s} \left( 2^{k+1}q_{0}\cdots q_{d-1}\left\vert \chi _{n,d}\right\vert /m\right) ^{(1-s)(m-1)} \\ & \hspace*{.5in}\ll
\sum_{k=0}^{\infty }\left( 2^{k}q_{0}\cdots q_{d-1}\left\vert \chi _{n,d}\right\vert /m\right) ^{-\theta m}.
\end{align*}
The final expression is comparable to $n^{-\theta m}$.
\end{proof}

We will first study $r_{N}^{(m)}(\omega )$ with $m=2$.

\begin{proposition}
\label{prop 4} Let $s=\frac{1}{2}+\theta $ where $0<\theta <\frac{1}{2}$. Then, for any $\varepsilon >0$, there exists $K=K(G,s)$ such that
\begin{equation*}
\sum_{N}P(\{\omega \in \Omega :r_{N}^{(2)}(\omega )\geq K\})<\varepsilon .
\end{equation*}
\end{proposition}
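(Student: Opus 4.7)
The plan is to decompose $r_N^{(2)}(\omega) = \sum_{t=0}^2 r_{N,t}^{(2)}(\omega)$, handle each summand by a Markov-type argument applied to $\binom{r_{N,t}^{(2)}}{K}$, and then combine across $t \in \{0,1,2\}$ using $\{r_N^{(2)} \geq 3K\} \subseteq \bigcup_t \{r_{N,t}^{(2)} \geq K\}$. The point of passing to the binomial moment is that it converts a tail bound into a bound on the $K$-th elementary symmetric function of the underlying pair probabilities, to which Lemma \ref{lemma 4} can be applied once independence is available.

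For the symmetric cases $t \in \{0,2\}$, the pairs $\{a,b\}$ with $a<b$ and $\pm\chi_a \pm \chi_b = \chi_N$ are pairwise disjoint---specifying one index forces the other---so the indicators $Y_a Y_b$ contributing to $r_{N,t}^{(2)}$ are mutually independent Bernoulli variables with success probabilities $\alpha_a \alpha_b$. The inequality $P(X \geq K) \leq E[\binom{X}{K}]$ then expresses the right-hand side as the $K$-th elementary symmetric function $\sigma_K$ of these probabilities, and by Lemma \ref{lemma 4} this is at most $\sigma_1^K/K! = (E[r_{N,t}^{(2)}])^K/K!$. Lemma \ref{lemma 3} (applied with $m=2$) bounds $E[r_{N,t}^{(2)}]$ by $C_2$ when $N=0$ and by $C_2/|N|^{2\theta}$ otherwise.

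The main obstacle is the ``cross'' case $t=1$, in which pairs $(a,b)$ with $\chi_a - \chi_b = \chi_N$ can share an index and the pair indicators are no longer independent. I plan to bypass this with a coloring argument: parameterize each such pair by its second entry $b$ (since $\chi_a = \chi_b + \chi_N$ determines a unique partner $a = a(b)$) and form the graph on the valid parameters with an edge $b \sim b'$ whenever $\chi_{b'} - \chi_b = \pm \chi_N$. Each vertex has at most two neighbors, so this graph has maximum degree $\leq 2$ and admits a $3$-coloring $C_1 \sqcup C_2 \sqcup C_3$. Writing $r_{N,1}^{(2)} = T_1 + T_2 + T_3$ with $T_i = \sum_{b \in C_i} Y_b Y_{a(b)}$, distinct $b$'s within one color class index disjoint pairs by construction, so the $t=2$ argument applies to each $T_i$ separately; a union bound over the three colors then yields the analogous tail estimate for $r_{N,1}^{(2)}$.

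Combining the three cases, $\sum_N P(r_N^{(2)} \geq K)$ is bounded (up to constants depending only on $G$ and $s$) by $(C^K/K!)\bigl(1 + \sum_{n \geq 1} n^{-2K\theta}\bigr)$ provided $K$ is taken to be a multiple of $9$ with $2K\theta > 1$. Since $C^K/K! \to 0$ as $K \to \infty$, one finally chooses $K$ large enough, in terms of $\varepsilon$, $G$, and $s$, to force the sum to be less than $\varepsilon$.
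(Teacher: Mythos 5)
Your proposal is correct and follows essentially the same route as the paper: both reduce to families of index-disjoint pairs so that independence applies, bound the resulting tail probability by an elementary symmetric function of the pair probabilities, control it via Lemma \ref{lemma 4} and Lemma \ref{lemma 3} (with $m=2$), and sum over $N$ using $2K\theta>1$. The only difference is organizational: where you split by $t$ and disjointify the $t=1$ case with a $3$-coloring of a degree-$\leq 2$ graph, the paper extracts $\lfloor K/4\rfloor$ pairwise disjoint pairs greedily (each index lies in at most four admissible pairs across the three equations) and then pigeonholes over the three equation types to obtain $\lfloor K/12\rfloor$ disjoint pairs of a single type.
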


\begin{proof}
By definition, we have
\begin{equation}
\begin{split}
r_{N}^{(2)}(\omega )= card & \{(a,b): \chi _{a},\chi _{b}\in E(\omega ), \\
 &  \pm(\chi _{a} +\chi _{b})=\chi _{N},a<b,\text{ or }\chi _{a}-\chi _{b}=\chi
_{N},a\not=b\}.  \label{eqn prop 4-1}
\end{split}
\end{equation}
If $\chi _{a^{\prime }}+\chi _{b^{\prime }}=\chi _{N}$ and $\chi _{a^{\prime }}+\chi _{c^{\prime }}=\chi _{N}$, then $b^{\prime }=c^{\prime }$, and
similarly, if we consider subtraction. Thus, given any $\chi _{a^{\prime }}$ there are at most four ways in which $\chi _{a^{\prime }}$ could appear as
one of $\chi _{a}$ or $\chi _{b}$ in the three equations considered in (\ref {eqn prop 4-1}). Hence, if $r_{N}^{(2)}(\omega )\geq K$, then there exist
at least $\lfloor K/4\rfloor $ pairs $(a_{i},b_{i})$, $1\leq i\leq \lfloor K/4\rfloor $, counted in ~(\ref{eqn prop 4-1}), such that every element of
the set $\{a_{i},b_{j}\}_{1\leq i,j\leq \lfloor K/4\rfloor }$ is distinct. By the pigeon hole principle, one of the three equations considered in (\ref
{eqn prop 4-1}) must be satisfied by at least one third of these $\lfloor K/4\rfloor $ pairs. Without loss of generality, we suppose it is the equation
$\chi _{a}+\chi _{b}=\chi _{N}$ with $a<b$, as the other two cases can be treated in a similar manner. Let $L=\lfloor K/12\rfloor $. By independence,
we have
\begin{align}
P(\{\omega \in \Omega :r_{N}^{(2)}(\omega )\geq K\}) & \leq P(\{\omega \in \Omega :\exists \ L\text{ pairs }(a_{i},b_{i}) \label{eqn prop 4-2'} \\
&\qquad  \text{such that }\chi _{a_{i}},\chi _{b_{i}}\in E(\omega ),  \chi _{a_{i}}+\chi _{b_{i}} =\chi _{N}, \notag \\ & \qquad a_{i}<b_{i},\text{ and
}a_{i},b_{j} \text{ all distinct}\})  \notag \\ &\leq \sum_{S(L)}\prod_{i=1}^{L}P(\{\omega \in \Omega _{m}:\chi _{a_{i}},\chi _{b_{i}}\in E(\omega
)\}),  \notag
\end{align}
where $S(L)$ is the collection of all $L$ distinct pairs, $(a_{i},b_{i})$, $ 1\leq i\leq L$, such that $\chi _{a_{i}}+\chi _{b_{i}}=\chi _{N}$ and $
a_{i}<b_{i}$.

Since the last inequality of (\ref{eqn prop 4-2'}) gives us an $L$-th elementary symmetric function, we can bound it by Lemmas \ref{lemma 4} and
\ref{lemma 3},
\begin{align}
P(\{\omega \in \Omega :r_{N}^{(2)}(\omega )\geq K\}) & \leq \frac{1}{L!} \left( \sum_{S(1)}\alpha _{a}\alpha _{b}\right) ^{L}  \notag \\ &  \notag \\
&\leq
\begin{cases}
\frac{1}{L!}C^{L}, & \mbox{if }N=0 \\ \frac{1}{L!}C^{L}\frac{1}{|N|^{2\theta L}}, & \mbox{if }N\neq 0,
\end{cases}
\notag
\end{align}
for some positive constant $C$. As $\theta >0$, we obtain
\begin{equation}
\sum_{N=-\infty }^{\infty }P(r_{N}^{(2)}(\omega )\geq K)\leq 2\sum_{N=2}^{\infty }\frac{1}{L!}C^{L}\frac{1}{|N|^{2\theta L}}+3\frac{1}{L!}
C^{L}<\varepsilon ,  \label{sum over N}
\end{equation}
for $L$ large enough. Notice that if $G=\bigoplus_{0}^{\infty }\mathbb{Z}(q)$ or $\mathbb{Z}(q^{\infty })$ or $\bigoplus_{0}^{\infty
}\mathbb{Z}(q_{n})$, then we only need to take the sum on the left side of ~(\ref{sum over N}) from $N=0$ to $\infty $.
\end{proof}

\begin{corollary}
\label{corollary 5} Let $s=\frac{1}{2}+\theta $ and $0<\theta <\frac{1}{2}$. Given any $\varepsilon >0$, there exists $K=K(G,s)$ and $\Omega
_{2}\subseteq \Omega $ such that $P(\Omega _{2})\geq 1-\varepsilon $ and $r_{N}^{(2)}(\omega )<K$ for all $N$ and for all $\omega \in \Omega _{2}$.
\end{corollary}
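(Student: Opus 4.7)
The plan is to derive this as an immediate consequence of Proposition \ref{prop 4} via a union bound. Applying Proposition \ref{prop 4} with the given $\varepsilon$ produces a constant $K = K(G,s)$ such that
\begin{equation*}
\sum_{N} P(\{\omega \in \Omega : r_N^{(2)}(\omega) \geq K\}) < \varepsilon.
\end{equation*}
(When $G \neq \mathbb{Z}$ the sum is over $N \geq 0$; in either case it is a countable sum.)

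Next I would define
\begin{equation*}
\Omega_2 := \Omega \setminus \bigcup_{N} \{\omega \in \Omega : r_N^{(2)}(\omega) \geq K\},
\end{equation*}
so that by construction $r_N^{(2)}(\omega) < K$ for every $N$ and every $\omega \in \Omega_2$. By countable subadditivity of $P$,
\begin{equation*}
P(\Omega \setminus \Omega_2) \leq \sum_{N} P(\{\omega \in \Omega : r_N^{(2)}(\omega) \geq K\}) < \varepsilon,
\end{equation*}
which gives $P(\Omega_2) \geq 1 - \varepsilon$, as required.

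There is no real obstacle here: the content is entirely in Proposition \ref{prop 4}, which already controls the tail sum; the corollary is just the passage from a summable family of bad-event probabilities to a single good event via the union bound. The only point to be slightly careful about is that the union is countable (which it is, since $N$ ranges over $\mathbb{Z}$ or $\mathbb{N} \cup \{0\}$ depending on $G$), so that subadditivity applies directly to the expression bounded by Proposition \ref{prop 4}.
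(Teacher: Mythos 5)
Your proof is correct and is essentially identical to the paper's: both apply Proposition \ref{prop 4} to obtain $K$, define $\Omega_2$ as the set where $r_N^{(2)}(\omega)<K$ for all $N$, and bound $P(\Omega_2^c)$ by the countable union bound.
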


\begin{proof}
By Proposition \ref{prop 4}, we can find $K$ such that $\sum_{N}P(r_{N}^{(2)}(\omega )\geq K)<\varepsilon.$ Let \newline $\Omega _{2}=\{\omega \in \Omega
:r_{N}^{(2)}(\omega )<K$ for all $N\}$. Then, we have
\begin{equation*}
\begin{split}
P(\Omega _{2}^{c}) & =P(\{\omega \in \Omega :\exists N\text{ such that } r_{N}^{(2)}(\omega )\geq K\}) \\ & \leq \sum_{N}P(r_{N}^{(2)}(\omega )\geq
K)<\varepsilon . \qedhere
\end{split}
\end{equation*}
\end{proof}

We complete the induction argument in the following proof. The argument is similar to the base case, but slightly more involved due to the larger value
of $m$. We will then prove the required density property of the subsets in Cor. \ref{theorem 7}.

\begin{theorem}
\label{theorem 6} Let $m\geq 2$ be a positive integer. If $s=\frac{m-1}{m} +\theta $ for $0<\theta <\frac{1}{m}$, then, for all $\varepsilon >0$, there
exists $K_{m}=K_{m}(G,\varepsilon ,s)$ and $\Omega _{m}\subseteq \Omega $ such that $P(\Omega _{m})\geq 1-\varepsilon $ and $r_{N}^{(m)}(\omega )<K_{m}
$ for all $N$ and for all $\omega \in \Omega _{m}$.
\end{theorem}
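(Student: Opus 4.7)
The plan is induction on $m$, with the base case $m=2$ being Corollary \ref{corollary 5}. For the inductive step I apply the statement at level $m-1$ with tolerance $\varepsilon/2$, obtaining $K_{m-1}$ and a set $\Omega_{m-1}\subseteq \Omega$ with $P(\Omega_{m-1})\geq 1-\varepsilon/2$ and $r_N^{(m-1)}(\omega)<K_{m-1}$ for every $N$ and every $\omega\in\Omega_{m-1}$. I will define $\Omega_m\subseteq \Omega_{m-1}$ by further excluding the event $\{\exists N:r_N^{(m)}(\omega)\geq K_m\}$, with probability less than $\varepsilon/2$, for a suitable $K_m$.

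The key new input beyond Proposition \ref{prop 4} is a degree bound on the ``representation hypergraph'' of $\chi_N$. For any $\omega\in\Omega_{m-1}$, any $N$ and any index $a'$, the number of $m$-tuples counted by $r_N^{(m)}(\omega)$ that contain the index $a'$ is bounded by some $D=D(m)\,K_{m-1}$: the value $t\in\{0,\dots,m\}$ and the position $j\in\{1,\dots,m\}$ of $a'$ in the tuple can be chosen in at most $(m+1)m$ ways, and once they are fixed the remaining $m-1$ indices form an $(m-1)$-representation of $\chi_N\mp\chi_{a'}$ with a prescribed sign and ordering pattern, of which there are strictly fewer than $K_{m-1}$ by induction. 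A standard greedy matching in this hypergraph then extracts, whenever $r_N^{(m)}(\omega)\geq K_m$ on $\Omega_{m-1}$, a subcollection of $L\geq K_m/(mD)$ $m$-representations of $\chi_N$ whose $Lm$ indices are pairwise all distinct.

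From here I mimic Proposition \ref{prop 4}. Writing $S(L)$ for the family of unordered $L$-element sets of $m$-representations of $\chi_N$ with pairwise disjoint index sets, independence of the Bernoullis $Y_n$ together with the above greedy step gives
\begin{equation*}
P\bigl(\Omega_{m-1}\cap \{r_N^{(m)}(\omega)\geq K_m\}\bigr)\leq \sum_{S(L)}\prod_{i=1}^{L}\prod_{j=1}^{m}\alpha_{a_{i,j}}.
\end{equation*}
Viewing each $m$-representation of $\chi_N$ as a single index carrying the $y$-value $\prod_{j=1}^{m}\alpha_{a_j}$, the right-hand side is bounded above by the $L$-th elementary symmetric function of these $y$'s, and Lemma \ref{lemma 4} yields the further bound $\sigma_1^{L}/L!$, where $\sigma_1$ is the sum of $\prod_{j=1}^{m}\alpha_{a_j}$ over all $m$-representations of $\chi_N$. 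Splitting $\sigma_1$ according to the (at most $(m+1)\cdot 2^m$) sign patterns and applying Lemma \ref{lemma 3} to each, I obtain $\sigma_1\leq C'_{m}$ when $N=0$ and $\sigma_1\leq C'_{m}/|N|^{m\theta}$ otherwise.

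Summing the resulting bound over $N$ gives a convergent series once $K_m$ (and hence $L$) is large enough that $m\theta L>1$, and the $1/L!$ factor then drives the total below $\varepsilon/2$. Setting $\Omega_m:=\Omega_{m-1}\cap\{r_N^{(m)}(\omega)<K_m\text{ for all }N\}$ therefore gives $P(\Omega_m)\geq 1-\varepsilon$, completing the induction. The principal subtlety compared to Proposition \ref{prop 4} is the degree-bound/greedy step: in the $m=2$ case, for each equation the partner of $\chi_a$ was determined by $\chi_N$, so distinct pairs were automatically index-disjoint; for larger $m$ many different $m$-tuples can share common indices, and one needs the inductive $r_N^{(m-1)}$ bound to control this before the independence-based elementary symmetric function estimate can be applied.
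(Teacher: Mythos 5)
Your proposal is correct and follows essentially the same route as the paper: induction from Corollary \ref{corollary 5}, using the inductive bound on $r^{(m-1)}$ applied to $\chi_N\mp\chi_{a'}$ to bound the number of $m$-tuples sharing a given index, a greedy extraction of index-disjoint representations, and then the independence/elementary-symmetric-function estimate via Lemmas \ref{lemma 4} and \ref{lemma 3} summed over $N$. The only cosmetic difference is that you treat all sign patterns $t$ at once and split $\sigma_1$ at the end, whereas the paper fixes $t$ first and intersects the resulting events $\widetilde{\Omega}_t$.
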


\begin{proof}
We proceed by induction. Corollary \ref{corollary 5} gives us the base case. Suppose the statement holds for $m_{0}<m$. Fix $\varepsilon >0$. We may
rewrite $s$ as $s=\frac{m_{0}-1}{m_{0}}+\theta ^{\prime }$ with $0<\theta ^{\prime }<\frac{1}{m_{0}}$. Thus, by the inductive hypothesis there exist
$K_{m_{0}}$ and $\Omega _{m_{0}}$ such that $P(\Omega _{m_{0}})\geq 1-\frac{ \varepsilon }{2({m_{0}}+2)}$ and $r_{N}^{({m_{0}})}(\omega )<K_{m_{0}}$
for all $N$ and for all $\omega \in \Omega _{m_{0}}$.

Let $\omega \in \Omega _{m_{0}}$ and fix $t\in \{0,1,\dots,{m_{0}}+1\}$ and integer $N$. Suppose for each $i=1,\dots,K$, we have
\begin{equation}
\chi _{a_{1}^{(i)}}+\cdots+\chi _{a_{t}^{(i)}}-\left( \chi _{a_{t+1}^{(i)}}+\cdots+\chi _{a_{{m_{0}}+1}^{(i)}}\right) =\chi _{N}, \label{thm 6 eqn 2}
\end{equation}
with
\begin{equation}
a_{1}^{(i)}<\cdots<a_{t}^{(i)},a_{t+1}^{(i)}<\cdots<a_{{m_{0}}+1}^{(i)}\text{ and } a_{u}^{(i)}\not=a_{u^{\prime }}^{(i)}\text{ if }u\not=u^{\prime }.
\label{thm 6 eqn 3}
\end{equation}
Assume there exist some $i_{1},\dots,i_{r}$ and $s_{1},\dots,s_{r}$ such that $a_{s_{1}}^{(i_{1})}=a_{s_{j}}^{(i_{j})}$ for all $j\in \{1,\dots,r\}$.
Then, for each $j\in \{1,\dots,r\}$, we have
\begin{equation}
\chi _{a_{1}^{(i_{j})}}+\cdots+\chi _{a_{t}^{(i_{j})}}-\left( \chi _{a_{t+1}^{(i_{j})}}+\cdots+\chi _{a_{{m_{0}}+1}^{(i_{j})}}\right) +\varepsilon
_{j}\chi _{a_{s_{j}}^{(i_{j})}}=\chi _{N}+\varepsilon _{j}\chi _{a_{s_{1}}^{(i_{1})}},  \label{thm 6 eqn 1}
\end{equation}
where $\varepsilon _{j}$ is $-1$ if $s_{j}\leq t$ and $+1$ if $s_{j}>t$, making the left hand side of (\ref{thm 6 eqn 1}) into a combination of
${m_{0}}$ terms. This gives us a total of $r$ representations for $\chi _{N}+\chi _{a_{s_{1}}^{(i_{1})}}$ and $\chi _{N}-\chi _{a_{s_{1}}^{(i_{1})}}$ as a
combination of ${m_{0}}$ terms. By the inductive hypothesis, we have $r\leq 2K_{m_{0}}$. Therefore, it follows that each $({m_{0}}+1)$-tuple,
$(a_{1}^{(i)},\dots,a_{{m_{0}}+1}^{(i)})$, $1\leq i\leq K$, has at most $2({ m_{0}}+1)K_{m_{0}}$ other $({m_{0}}+1)$-tuples, which it may possibly
share an entry with. Hence, by reordering if necessary, there exists a subset of
$L=\Big{\lfloor}\frac{K}{2({m_{0}}+1)K_{m_{0}}}\Big{\rfloor}({m_{0}}+1)$ -tuples, $(a_{1}^{(l)},\dots,a_{{m_{0}}+1}^{(l)})$, $1\leq l\leq L$, which
satisfy (\ref{thm 6 eqn 2}) and ~(\ref{thm 6 eqn 3}), and the additional condition that every element of the set $\{a_{j}^{(l)}\}_{1\leq j\leq
{m_{0}}+1,1\leq l\leq L}$ are distinct.

From the discussion above, and by independence, we have
\begin{align}
P(\{\omega &\in \Omega _{m_{0}}:r_{N,t}^{({m_{0}}+1)}(\omega )\geq K\}) \label{eqn thm 6}
\\
&\leq P ( \{ \omega \in \Omega _{m_{0}}:\exists \ L\
({m_{0}}+1)\text{ -tuples }(a_{1}^{(l)},\dots,a_{{m_{0}}+1}^{(l)}),
\notag \\
& \qquad 1\leq l\leq L,\text{ such that }\sum_{s=1}^{t}\chi
_{a_{s}^{(l)}}-\sum_{s=t+1}^{{m_{0}}+1}\chi _{a_{s}^{(l)}}=\chi _{N},
\notag  \\
& \qquad  \text{ all }a_{j}^{(l)}\text{ are distinct, and }\chi
_{a_{j}^{(l)}}\in E(\omega )\} )
\notag \\
&\leq \sum_{S(L)}\prod_{l=1}^{L}P(\{\omega \in \Omega _{m_{0}}:\chi _{a_{j}^{(l)}}\in E(\omega ),1\leq
j\leq {m_{0}}+1\}),  \notag
\end{align}
where $S(L)$ is the collection of all $L$ distinct $({m_{0}}+1)$-tuples, $(a_{1}^{(l)},\dots,a_{{m_{0}}+1}^{(l)})$, $1\leq l\leq L$, such that
\begin{equation*}
\sum_{s=1}^{t}\chi _{a_{s}^{(l)}}-\sum_{s=t+1}^{{m_{0}}+1}\chi _{a_{s}^{(l)}}=\chi _{N},
\end{equation*}
and $a_{i}^{(l)}\not=a_{j}^{(l)}$ if $i\not=j$.

Since the last inequality of (\ref{eqn thm 6}) gives us an $L$-th elementary symmetric function, we can bound it by Lemmas \ref{lemma 4} and \ref{lemma
3},
\begin{align}
P(\omega \in \Omega _{m_{0}}:r_{N,t}^{({m_{0}}+1)}(\omega )\geq K\}) & \leq \frac{1}{L!}\left( \sum_{S(1)}\alpha _{a_{1}}\dots\alpha _{a_{{m_{0}}
+1}}\right) ^{L}  \notag \\ &\leq
\begin{cases}
\frac{1}{L!}C_{{m_{0}}+1}^{L}, & \mbox{if }N=0, \\ \frac{1}{L!}\left( C_{{m_{0}}+1}\frac{1}{|N|^{({m_{0}}+1)\theta }}\right) ^{L}, & \mbox{if }N\neq
0,
\end{cases}
\notag
\end{align}
for some positive constant $C_{{m_{0}}+1}$.

For each $t\in \{0,1,\dots,{m_{0}}+1\}$, let $\widetilde{\Omega }_{t}=\{\omega \in \Omega _{m_{0}}:r_{N,t}^{({m_{0}}+1)}(\omega )<K$ for all $N\}$. We
can then follow the arguments of Proposition \ref{prop 4} and Corollary \ref {corollary 5} and deduce the existence of $K$ such that for any $t\in
\{0,1,\dots,{m_{0}}+1\}$,
\begin{align}
P(\widetilde{\Omega }_{t}^{c}) &\leq P(\{\omega \in \Omega _{m_{0}}:\exists N\text{ such that }r_{N,t}^{({m_{0}}+1)}(\omega )\geq K\})+P(\Omega /\Omega
_{m_{0}}) \\ &\leq \sum_{N}P(\omega \in \Omega _{m_{0}}:r_{N,t}^{({m_{0}}+1)}(\omega )\geq K\})+\frac{\varepsilon }{2({m_{0}}+2)}  \notag \\
&<\frac{\varepsilon }{{m_{0}}+2}.  \notag
\end{align}
If we let $K_{{m_{0}}+1}=({m_{0}}+2)K$ and $\Omega _{{m_{0}} +1}=\bigcap_{t=0}^{{m_{0}}+1}\widetilde{\Omega }_{t}$, the result follows by (\ref{def
r}).
\end{proof}

\begin{corollary}
\label{corollary 7} Let $m\geq 2$ be a positive integer. If $s=\frac{m-1}{m} +\theta $ for $0<\theta <\frac{1}{m}$, then for a.e.\ $\omega $
\begin{equation*}
\sup_{N}r_{N}^{(m)}(\omega )<\infty .
\end{equation*}
\end{corollary}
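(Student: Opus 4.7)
The strategy is to promote the ``high probability with a $\varepsilon$-dependent bound $K_m$'' statement of Theorem \ref{theorem 6} to an ``almost sure finiteness'' statement by letting $\varepsilon$ run through a sequence tending to $0$ and taking the union of the associated good events. Note that the bound $K_m = K_m(G,\varepsilon,s)$ may blow up as $\varepsilon \to 0$, so a single uniform constant is not available; however, for the conclusion $\sup_N r_N^{(m)}(\omega) < \infty$ we only need an $\omega$-dependent finite bound, which is exactly what this union construction provides.

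Concretely, for each positive integer $k$ I would invoke Theorem \ref{theorem 6} with $\varepsilon = 1/k$ to produce a constant $K_m^{(k)} = K_m(G,1/k,s)$ and a measurable set $\Omega_m^{(k)} \subseteq \Omega$ with $P(\Omega_m^{(k)}) \geq 1 - 1/k$ such that $r_N^{(m)}(\omega) < K_m^{(k)}$ for every $N$ and every $\omega \in \Omega_m^{(k)}$. Setting $\Omega^* = \bigcup_{k=1}^{\infty} \Omega_m^{(k)}$, the complement satisfies $P((\Omega^*)^c) \leq P((\Omega_m^{(k)})^c) \leq 1/k$ for every $k$, hence $P(\Omega^*) = 1$. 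Any $\omega \in \Omega^*$ lies in at least one $\Omega_m^{(k)}$, and therefore $\sup_N r_N^{(m)}(\omega) < K_m^{(k)} < \infty$, which is the desired conclusion for a.e.\ $\omega$.

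There is essentially no obstacle here: Theorem \ref{theorem 6} does all the work, and the corollary is a routine ``$\varepsilon$-$1/k$ exhaustion'' argument. The only small point to keep in mind is that the finite bound one obtains for a given $\omega$ is not uniform in $\omega$ (it is the $K_m^{(k)}$ of the first set in the exhaustion that contains $\omega$), but that is exactly why the statement is phrased as $\sup_N r_N^{(m)}(\omega) < \infty$ rather than a uniform-in-$\omega$ bound.
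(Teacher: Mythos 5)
Your argument is correct and is exactly the routine exhaustion the paper has in mind: its proof of this corollary consists of the single line ``This follows easily from Theorem \ref{theorem 6},'' and your $\varepsilon = 1/k$ union construction is the natural way to fill that in. Nothing further is needed.
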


\begin{proof}
This follows easily from Theorem \ref{theorem 6}.
\end{proof}

To prove Theorem \ref{thm main intger} we make use of the following variant of the Strong law of large numbers (c.f. \cite[p 140, Thm 11]{HR}).
Notation: $Exp(Y)$ denotes the expectation of the random variable $Y$.
\begin{theorem}
\label{probthm}Let $\{Y_{i}\}$ be simple, independent random variables and $S_{N}=\sum_{i=1}^{N}Y_{i}$. Assume $Exp(Y_{i})>0$, $\lim_{N\rightarrow
\infty }S_{N}=\infty $ and
\begin{equation*}
\sum_{i}\frac{VarY_{i}}{(Exp(S_{i}))^{2}}<\infty.
\end{equation*}
Then $S_{N}/Exp(S_{N})\rightarrow 1$ as $N\rightarrow \infty $ a.e.
\end{theorem}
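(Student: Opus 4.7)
The plan is the classical Kronecker's lemma strategy: centre the summands, apply Kolmogorov's one-series theorem to the rescaled sum, and transfer the conclusion back. Write $b_i := Exp(S_i)$ and $Z_i := Y_i - Exp(Y_i)$, so that $S_N - Exp(S_N) = \sum_{i=1}^{N} Z_i$, with $Exp(Z_i) = 0$ and $Var(Z_i) = Var(Y_i)$; the convergence hypothesis then reads $\sum_i Var(Z_i)/b_i^{2} < \infty$, and the conclusion we are after is equivalent to $b_N^{-1}\sum_{i=1}^{N} Z_i \to 0$ a.e.

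First I would verify that $b_N \to \infty$. Since $Exp(Y_i) > 0$ for every $i$, the sequence $(b_N)$ is strictly increasing, so either $b_N \to \infty$ or it converges to some finite $L$. In the latter case $b_i$ is bounded away from both $0$ and $\infty$, so the hypothesis forces $\sum_i Var(Y_i) < \infty$. Kolmogorov's one-series theorem applied to the centred independent variables $Z_i$ would then give a.e.\ convergence of $\sum_i Z_i$, and hence a.e.\ convergence of $S_N$ to a finite limit, contradicting the assumption $\lim_N S_N = \infty$. Therefore $b_N \uparrow \infty$.

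Next, set $X_i := Z_i/b_i$. The $X_i$ are independent, $Exp(X_i) = 0$, and
\begin{equation*}
\sum_i Var(X_i) \;=\; \sum_i \frac{Var(Y_i)}{(Exp(S_i))^{2}} \;<\; \infty
\end{equation*}
by hypothesis, so Kolmogorov's one-series theorem gives that $\sum_i Z_i/b_i$ converges almost everywhere. Kronecker's lemma, applied with the divergent weights $b_i \uparrow \infty$, then yields
\begin{equation*}
\frac{1}{b_N}\sum_{i=1}^{N} b_i \cdot \frac{Z_i}{b_i} \;=\; \frac{S_N - Exp(S_N)}{Exp(S_N)} \;\longrightarrow\; 0 \quad \text{a.e.,}
\end{equation*}
which is exactly $S_N/Exp(S_N) \to 1$ a.e.

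The only real obstacle is the preliminary step of extracting $Exp(S_N) \to \infty$ from the stated hypotheses, since without it Kronecker's lemma cannot be invoked; the rest is the standard two-line pairing of Kolmogorov's convergence criterion with Kronecker's lemma and requires no further input beyond independence and the summable-variance condition.
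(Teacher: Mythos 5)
Your proof is correct and complete: the reduction to $Exp(S_N)\to\infty$, the application of Kolmogorov's one-series theorem to $Z_i/Exp(S_i)$, and the pathwise use of Kronecker's lemma are all in order. The paper itself gives no proof of this statement --- it only cites Halberstam and Roth (\emph{Sequences}, p.~140, Thm.~11) --- and your argument is the standard one underlying that reference, so there is nothing further to reconcile.
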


\begin{corollary}
\label{theorem 7} Let $m\geq 2$ be a positive integer. Given any $\varepsilon >0$, there exists $K_{m}=K_{m}(G,\varepsilon )$ and a set $E(\omega
)\subseteq G^{\prime }$ such that
\begin{equation*}
r_{N}^{(m)}(\omega )<K_{m}
\end{equation*}
for all $N$ and there is a constant $c=c(G,m,\varepsilon )$ such that
\begin{equation}
card\left( \{\chi _{1},\dots,\chi _{n}\}\bigcap E(\omega )\right) \geq cn^{ \frac{1}{m}-\varepsilon }  \label{density}
\end{equation}
for all $n$ when $G=\mathbb{Z}$, $\bigoplus_{0}^{\infty }\mathbb{Z}(q)$ or $\mathbb{Z}(q^{\infty })$, and for infinitely many \thinspace $n$ when
$G=\bigoplus_{0}^{\infty }\mathbb{Z}(q_{j}).$
\end{corollary}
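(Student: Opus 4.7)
The plan is to apply Theorem \ref{theorem 6} to obtain the bound on $r_N^{(m)}(\omega)$, and then use the SLLN variant (Theorem \ref{probthm}) to obtain the density bound, then intersect both high-probability events to extract a good $\omega$. Concretely, given $\varepsilon>0$, I would first choose $\theta\in(0,\min\{\varepsilon,1/m\})$ and set $s=\frac{m-1}{m}+\theta$. Theorem \ref{theorem 6} then yields a constant $K_m$ and a set $\Omega_m\subseteq\Omega$ with $P(\Omega_m)\geq 1/2$ (say) on which $r_N^{(m)}(\omega)<K_m$ for every $N$.

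Next I would verify the hypotheses of Theorem \ref{probthm} for the Bernoulli variables $Y_n$. For $G=\mathbb{Z}$, $\bigoplus_0^\infty\mathbb{Z}(q)$, or $\mathbb{Z}(q^\infty)$, we have $\mathrm{Exp}(Y_n)=\alpha_n=n^{-s}>0$, and $\mathrm{Exp}(S_N)=\sum_{n\leq N}n^{-s}\asymp N^{1-s}\to\infty$ since $s<1$. Since $\mathrm{Var}(Y_n)\leq\alpha_n\leq n^{-s}$, we get
\begin{equation*}
\sum_{n}\frac{\mathrm{Var}(Y_n)}{\mathrm{Exp}(S_n)^2}\ll\sum_{n}\frac{n^{-s}}{n^{2(1-s)}}=\sum_n n^{s-2}<\infty,
\end{equation*}
because $s<1$ forces $s-2<-1$. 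Thus $S_N/\mathrm{Exp}(S_N)\to 1$ almost everywhere. Let $\Omega'$ be the full-measure set on which this convergence holds. Then $\Omega_m\cap\Omega'$ has probability $\geq 1/2$, hence is nonempty; I would fix any $\omega$ in this intersection. For such $\omega$, on the one hand $r_N^{(m)}(\omega)<K_m$ for all $N$, and on the other hand $S_n(\omega)\geq \tfrac{1}{2}\mathrm{Exp}(S_n)\gg n^{1-s}=n^{1/m-\theta}\geq n^{1/m-\varepsilon}$ for all sufficiently large $n$. Since $\alpha_1=1$ in these three cases, $\chi_1\in E(\omega)$ automatically, so $S_n\geq 1$ for all $n\geq 1$; choosing the implicit constant $c$ small enough absorbs the finitely many small values of $n$ and yields (\ref{density}) for every $n$.

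The main subtlety, and really the only non-routine part, is the case $G=\bigoplus_0^\infty\mathbb{Z}(q_n)$, where $\alpha_n$ vanishes on large ranges of indices (see (\ref{prob2})), so Theorem \ref{probthm} cannot be applied directly to the full sequence $(Y_n)$. I would restrict attention to the index set $A=\{n:\alpha_n>0\}$, re-index $(Y_n)_{n\in A}$ and apply Theorem \ref{probthm} to this subsequence; the variance computation goes through with the same exponent since $\alpha_n\leq n^{-s}$ there. Setting $N_d=(2\lfloor q_d/(8m)\rfloor+1)q_0\cdots q_{d-1}$ for those $d$ with $q_d>8m$, a direct estimate gives
\begin{equation*}
\mathrm{Exp}(S_{N_d})=\!\!\sum_{n\in A,\,n\leq N_d}\!\!n^{-s}\gg N_d^{1-s}=N_d^{1/m-\theta},
\end{equation*}
using that $N_d/(q_0\cdots q_{d-1})\to\infty$ since $q_d\to\infty$. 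The SLLN applied along $A$ then gives $S_{N_d}(\omega)\gg N_d^{1/m-\varepsilon}$ for infinitely many $d$, which is the ``infinitely many $n$'' conclusion in the statement. Combining with the $\Omega_m$ bound from Theorem \ref{theorem 6} completes the argument.
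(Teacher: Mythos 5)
Your proposal is correct and follows essentially the same route as the paper: invoke Theorem \ref{theorem 6} for the uniform bound on $r_N^{(m)}$, verify the hypotheses of Theorem \ref{probthm} to get the density estimate, intersect the two events, and in the $\bigoplus_0^\infty\mathbb{Z}(q_n)$ case re-index along $\{n:\alpha_n>0\}$ and evaluate $\mathrm{Exp}(S_N)$ at the endpoints $N_d$ exactly as the paper does with its $Z_i=Y_{j_i}$. The only (harmless) difference is your explicit handling of small $n$ via $\alpha_1=1$ to get the bound for all $n$, a point the paper leaves implicit.
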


\begin{proof}
Let $s=\frac{m-1}{m}+\varepsilon $. The result follows easily from Theorem \ref{probthm} when $G=\mathbb{Z}$, $\bigoplus_{0}^{\infty }\mathbb{Z}(q)$ or
$\mathbb{Z}(q^{\infty })$.

In the case that $G=\bigoplus_{0}^{\infty }\mathbb{Z}(q_{n})$, let $\{j_{i}\} $ be the indices such that $\alpha _{j_{i}}\neq 0$. Put $Z_{i}=Y_{j_{i}}$
and $S_{N}=\sum_{i=1}^{N}Z_{i}$. Then $VarZ_{i}\leq Exp(Y_{j_{i}})\leq j_{i}^{-s}$ and $Exp(S_{N})=Exp\left( \sum_{i=1}^{j_{N}}Y_{i}\right) $. If we
suppose $j_{N}\in \lbrack q_{0}\cdots q_{d},q_{0}\cdots q_{d+1})$ (which implies $j_{N}\leq (2\left\lfloor \frac{q_{d+1}}{8m}\right\rfloor
+1)q_{0}\cdots q_{d}$), then
\begin{equation}
Exp(S_{N})=\sum_{j<q_{0}\cdots q_{d}}\alpha _{j}+\sum_{j=q_{0}\cdots q_{d}}^{j_{N}}\alpha _{j}.  \label{sum}
\end{equation}
Provided $d$ is suitably large, the first sum is at least
\begin{align*}
\sum_{j=q_{0}\cdots q_{d-1}}^{q_{0}\cdots q_{d}-1}\alpha _{j} &\geq \sum_{j=q_{0}\cdots q_{d-1}}^{q_{0}\cdots q_{d}/(8m) - 1}j^{-s} \\ &\gg \left(
q_{0}\cdots q_{d}/(8m)\right) ^{1-s}-\left( q_{0}\cdots q_{d-1}\right) ^{1-s} \\ &\gg \left( q_{0}\cdots q_{d}\right) ^{1-s}.
\end{align*}
If $q_{0}\cdots q_{d}<j\leq j_{N}$, then we must also have $\alpha _{j}=j^{-s}$. Hence the second sum in (\ref{sum}) is equal to $\sum_{j=q_{0}\cdots
q_{d}+1}^{j_{N}}j^{-s}$ and that is comparable to $j_{N}^{1-s}-\left( q_{0}\cdots q_{d}\right) ^{1-s}$. Putting these together, it follows that
$Exp(S_{N})\gg j_{N}^{1-s}$.

One can also easily check that
\begin{equation*}
\sum_{j\leq q_{0}\cdots q_{d}}\alpha _{j}\ll j_{N}^{1-s},
\end{equation*}
so that $Exp(S_{N})$ is comparable to $j_{N}^{1-s}$. Thus Theorem \ref {probthm} can again be applied to deduce that for a.e. $\omega $,
\begin{equation*}
S_{N}(\omega )=card\left( E(\omega )\bigcap \{\chi _{1},\dots,\chi _{j_{N}}\}\right) \gg j_{N}^{1-s}.
\end{equation*}
In particular, for large $d$,
\begin{align*}
card\left( E(\omega )\bigcap \prod\limits_{n=0}^{d}\mathbb{Z}(q_{n})\right) &=card\left( E(\omega )\bigcap \{\chi _{1},\dots,\chi _{(2\left\lfloor
q_{d}/8m\right\rfloor +1)q_{0}\cdots q_{d-1}}\}\right) \\ &\gg \left( (2\left\lfloor q_{d}/8m\right\rfloor +1)q_{0}\cdots q_{d-1}\right) ^{1-s}\gg
\left( q_{0}\cdots q_{d}\right) ^{1-s}.
\end{align*}
\end{proof}

\section{Application to the existence of thin sets in harmonic analysis\label
{HA}}

In this section, $G$ will denote any discrete abelian group with compact dual group $X$. The groups $\mathbb{Z}$, $\bigoplus_{0}^{\infty }\mathbb{Z}
(q)$, $\bigoplus_{0}^{\infty }\mathbb{Z(}q_{n}\mathbb{)}$, and $\mathbb{Z} (q^{\infty })$ are examples of such discrete groups. The notation
$\widehat{f }$ denotes the Fourier transform of the integrable function $f$ defined on $X $. A subset $E$ of $G$ is said to be a $\Lambda (p)$
\textit{set} for $p>2$ if there is a constant $C_{p}$ such that $\left\Vert f\right\Vert _{p}\leq C_{p}\left\Vert f\right\Vert _{2}$ whenever $f$ is an
$E$ -trigonometric polynomial, meaning $\widehat{f}$ is non-zero only on $E$. As $L^{p}(X)$ $\subseteq L^{q}(X)$ if $p\geq q$, it follows that if $E$
is $\Lambda (p)$, then $E$ is $\Lambda (q)$ for all $q\leq p$.

This notion was introduced for subsets of $\mathbb{Z}$ by Rudin (\cite{Ru}), who proved many important facts about $\Lambda (p)$ sets. In particular,
he showed that if $E\subseteq \mathbb{Z}$ is $\Lambda (p)$, then for all integers $a,d$,
\begin{equation}
card\left( E\bigcap \{a+d,\dots,a+Nd\}\right) \ll N^{2/p}.  \label{Rudin1}
\end{equation}
He also showed that if for some integer $m\geq 2$,
\begin{equation}
\sup_{n\in \mathbb{Z}}\left( card\left\{ (n_{1},\dots,n_{m})\in E^{m}:n=n_{1}+\cdots +n_{m}\right\} \right) <\infty  \label{Rudin}
\end{equation}
then $E$ is $\Lambda (2m)$. Rudin used these properties to construct examples of subsets of $\mathbb{Z}$ which were $\Lambda (2m)$ for a specified
integer $m\geq 2$, but not $\Lambda (2m+\varepsilon )$ for any $\varepsilon >0$. Hajela in \cite{Ha} extended Rudin's properties and constructions to
various other discrete abelian groups, although only achieving the existence of `exact' $\Lambda (2m)$ sets for $m<q$ when $G=\bigoplus_{0}^{\infty
}\mathbb{Z(}q\mathbb{)}$ for $q$ prime, and for $m=2$ when $G=\mathbb{Z}(q^{\infty }).$ Later, Bourgain \cite{Bo} completely settled this problem by
using sophisticated probabilistic methods to prove the existence of exact sets $\Lambda (p)$ for all $p>2$ and for all infinite, discrete abelian
groups $G$.

Using Pisier's operator space complex interpolation, Harcharras in \cite{Har} introduced the notion of completely bounded $\Lambda (p)$ sets: Let
$p>2$. The set $E\subseteq G$ is called \textit{completely bounded} $\Lambda (p)$ (or cb$\Lambda (p)$ for short) if there is a constant $C_{p}$ such
that
\begin{equation*}
\left\Vert f\right\Vert _{L^{p}(X,S_{p})}\leq C_{p}\max \left[ \left\Vert (\sum_{\gamma \in E}\widehat{f}(\gamma )^{\ast }\widehat{f}(\gamma
))^{1/2}\right\Vert _{S_{p}},\left\Vert (\sum_{\gamma \in E}\widehat{f} (\gamma )\widehat{f}(\gamma )^{\ast })^{1/2}\right\Vert _{S_{p}}\right]
\end{equation*}
for all $S_{p}$-valued, $E$-trigonometric polynomials defined on $X$. Here $S_{p}$ denotes the Schatten $p$-class with $\left\Vert T\right\Vert
_{S_{p}}=\left( tr\left\vert T\right\vert ^{p}\right) ^{1/p}$ and
\begin{equation*}
\left\Vert f\right\Vert _{L^{p}(X,S_{p})}=\left( \int_{X}\left\Vert f(x)\right\Vert _{S_{p}}^{p}dx\right) ^{1/p}.
\end{equation*}
Harcharras showed that completely bounded $\Lambda (p)$ sets are always $\Lambda (p),$ but not the converse. She also improved upon Rudin's condition
(\ref{Rudin}) by establishing that $E\subseteq G$ is cb$\Lambda (2m)$ for integer $m\geq 2$ if
\begin{equation}
\sup_{\chi \in G}\left( card\left\{ (\chi _{1},\dots,\chi _{m})\in E^{m}:\chi =\sum_{j=1}^{m}(-1)^{j}\chi _{j}\text{ with }\chi _{j}\text{ distinct}
\right\} \right) <\infty .  \label{Har}
\end{equation}
We remark that this condition was new even for $\Lambda (2m)$ sets. In \cite {Har}, Harcharras used this property to construct examples of subsets of
$\mathbb{Z}$ that were cb$\Lambda (2m)$ but not $\Lambda (2m+\varepsilon )$ for any $\varepsilon >0$. Here we will generalize upon this result
by using (\ref{Har}) to show that every infinite, discrete abelian group $G$ admits a set that is cb$\Lambda (2m)$, but not $\Lambda (2m+\varepsilon )$
for any given $\varepsilon >0$. This will use the work of the previous part of the paper, as well as the following known generalization of
(\ref{Rudin1}).

\begin{lemma}
\label{necessary} \cite{EHR} If $E\subseteq G$ is a $\Lambda (p)$ set for some $p>2$, then there is a constant $C$ such that $card\left( E\bigcap
Y\right) \leq CN^{2/p}$ whenever $Y$ $\subseteq G$ is either an arithmetic progression or a finite subgroup of cardinality $N$.
\end{lemma}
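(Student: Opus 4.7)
The plan is to test the $\Lambda(p)$ inequality against the idempotent trigonometric polynomial $f = \sum_{\gamma \in F}\gamma$ on $X$, where $F = E \cap Y$ has cardinality $N_0$. Since $\widehat{f} = \mathbf{1}_F$ is supported on $E$, the hypothesis immediately gives $\|f\|_p \leq C_p \|f\|_2 = C_p N_0^{1/2}$, and the objective becomes to produce, in each of the two cases for $Y$, a matching lower bound of order $N_0 N^{-1/p}$ either on $\|f\|_p$ directly or on a paired integral $\int_X f\,\overline{K}$ for a suitable kernel $K$; such a bound forces $N_0 \leq C_p^2 N^{2/p}$.

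For a finite subgroup $Y \subseteq G$, the lower bound is immediate by concentration: the annihilator $H = Y^\perp \subseteq X$ has Haar measure $1/|Y| = 1/N$, and every $\gamma \in Y$ is identically $1$ on $H$, so $f \equiv N_0$ on $H$, giving $\|f\|_p \geq N_0 N^{-1/p}$. For an arithmetic progression $Y = \{a + jd : 0 \leq j < N\}$ this direct trick fails, and I would instead pair $f$ with a Fej\'er-type kernel. Pulling the classical Fej\'er kernel $F_{2N}$ on $\mathbb{T}$ back along the continuous character $\chi \mapsto \chi(d)$, and multiplying by the character $\chi \mapsto \chi(a')$ with $a' = a + \lfloor N/2\rfloor d$, yields a trigonometric polynomial $K$ on $X$ with $\widehat{K}(a' + kd) = 1 - |k|/(2N)$ for $|k| < 2N$ and zero elsewhere; in particular $\widehat{K} \geq 1/2$ on all of $Y$, while $|K|$ is a bona fide Fej\'er kernel on $X$ with $\|K\|_1 = 1$ and $\|K\|_\infty = O(N)$. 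Parseval then gives $\int_X f\,\overline{K} = \sum_{\gamma \in F}\widehat{K}(\gamma) \geq N_0/2$, while H\"older together with log-convexity $\|K\|_{p'} \leq \|K\|_1^{1/p'}\|K\|_\infty^{1/p} \ll N^{1/p}$ bounds the same integral above by $C_p N_0^{1/2}\cdot O(N^{1/p})$. Rearranging produces $N_0 \ll N^{2/p}$.

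The main point to verify carefully is that the pullback $F_{2N}\circ(\chi\mapsto\chi(d))$ genuinely is a trigonometric polynomial on $X$ with spectrum $\{kd : |k| < 2N\} \subseteq G$ and the advertised Fourier weights; this reduces to the identity $\chi(d)^k = \chi(kd)$, with the mild bookkeeping that if $d$ has finite order $M$ in $G$, then $kd$ is read modulo $M$, but the AP $Y$ must then already lie in a set of at most $M$ distinct elements, so the argument still closes. Everything beyond this kernel construction is routine Parseval and H\"older manipulation, so the only substantive work sits in this Fej\'er-kernel step.
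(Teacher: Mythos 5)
The paper gives no proof of this lemma at all --- it is quoted from \cite{EHR} --- so there is nothing internal to compare against; your argument is correct and is essentially the classical argument of Rudin \cite{Ru} that the cited reference adapts to general groups. The subgroup case is clean: $Y^{\perp}$ is an open subgroup of $X$ of index $N$, the idempotent $f$ equals $N_{0}$ identically there, and $N_{0}N^{-1/p}\leq \Vert f\Vert _{p}\leq C_{p}N_{0}^{1/2}$ closes it. For the progression, the only point genuinely needing verification is the one you flag yourself: when $d$ has finite order $M$ the frequencies $kd$, $|k|<2N$, can collide. Since $Y$ consists of $N$ distinct elements one necessarily has $M\geq N$, so each element of $\langle d\rangle$ is hit by at most $O(1)$ values of $k$ in that range; the Fourier coefficients of your kernel are then sums of at most $O(1)$ nonnegative Fej\'{e}r weights, hence remain nonnegative, are still at least $1/2$ on $Y$, and $\Vert K\Vert _{1}$ (which equals the zeroth Fourier coefficient of the pulled-back Fej\'{e}r kernel, since that kernel is nonnegative) is $O(1)$ rather than exactly $1$. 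All constants absorb this, and the remaining Parseval, H\"{o}lder and log-convexity steps are as routine as you say, so the proof is complete.
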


Fix an integer $m\geq 2$ and $\varepsilon >0$. We will first consider $G= \mathbb{Z}$, $\mathbb{Z(}q^{\infty }),$ $\bigoplus_{0}^{\infty }Z(q)$ or
$\bigoplus_{0}^{\infty }Z(q_{n})$, where the $q_{n}$ are strictly increasing, odd primes. We denote the elements of $G^{\prime }$ as $\{\chi
_{n}\}_{n=1}^{\infty },$ as described in the previous section and let $E(\omega )$ be the random sets defined previously, with $s=(m-1)/m+\theta $
where $\theta >0$ is chosen so that $1-s>2/(2m+\varepsilon )$.

\begin{proposition}
For a.e. $\omega $, $E(\omega )$ is cb$\Lambda (2m)$ but not $\Lambda (2m+\varepsilon )$.
\end{proposition}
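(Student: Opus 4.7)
The proof decomposes into an upper bound (cb$\Lambda(2m)$) and a lower bound (not $\Lambda(2m+\varepsilon)$); both follow from the earlier work combined with Harcharras's sufficient condition (\ref{Har}) and the necessary condition of Lemma \ref{necessary}. For the upper bound, Corollary \ref{corollary 7} guarantees that for almost every $\omega$ there is a constant $K(\omega)$ with $r_N^{(m)}(\omega) < K(\omega)$ for every $N$. Fix such $\omega$ and $\chi \in G$, and consider an ordered $m$-tuple $(\chi_{b_1}, \ldots, \chi_{b_m}) \in E(\omega)^m$ of distinct elements with $\chi = \sum_{j=1}^m (-1)^j \chi_{b_j}$. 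Sorting the entries at the positive-sign positions in increasing order of index and doing the same at the negative-sign positions produces a tuple counted by $r_{N,t}^{(m)}(\omega)$ with $t = \lfloor m/2 \rfloor$, where $N$ is the index of $\chi$ (with $N = 0$ in the non-$\mathbb{Z}$ cases). Conversely, each such sorted tuple lifts to at most $t!(m-t)! \leq m!$ Harcharras-ordered tuples by permutations within the two parts, so the Harcharras count is at most $m! \cdot r_N^{(m)}(\omega) < m! K(\omega)$, uniformly in $\chi$. Thus (\ref{Har}) holds and $E(\omega)$ is cb$\Lambda(2m)$.

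For the lower bound, the choice $1 - s > 2/(2m+\varepsilon)$ is precisely what is required to make the density lower bound incompatible with being $\Lambda(2m+\varepsilon)$. Corollary \ref{theorem 7} gives, almost surely, $\text{card}(\{\chi_1, \ldots, \chi_n\} \cap E(\omega)) \gg n^{1-s}$ for all (resp.\ for infinitely many) large $n$, depending on $G$. The key observation is that for each group, an appropriate initial segment $\{\chi_1, \ldots, \chi_n\}$ is either an arithmetic progression or (up to the identity) a finite subgroup: it is $\{1, \ldots, n\}$ for $G = \mathbb{Z}$; the subgroup $\prod_{i=0}^d \mathbb{Z}(q)$ with its identity removed for $G = \bigoplus_{0}^{\infty} \mathbb{Z}(q)$ when $n = q^{d+1} - 1$; the nonidentity $q^{d+1}$-th roots of unity for $G = \mathbb{Z}(q^\infty)$; and the finite subgroup $\prod_{i=0}^d \mathbb{Z}(q_n)$ for $G = \bigoplus_{0}^{\infty} \mathbb{Z}(q_n)$, inside which Corollary \ref{theorem 7} records the density lower bound directly. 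Were $E(\omega)$ to be $\Lambda(2m+\varepsilon)$, Lemma \ref{necessary} would force each such intersection to have cardinality $\ll n^{2/(2m+\varepsilon)}$, contradicting the $n^{1-s}$ bound for large $n$.

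The principal difficulty here is essentially bookkeeping: translating between the combinatorial quantities $r_N^{(m)}$ and the Harcharras count, and ensuring (in the $\bigoplus_{0}^{\infty} \mathbb{Z}(q_n)$ case) that the indices at which the density lower bound holds correspond to sizes of finite subgroups so that Lemma \ref{necessary} is applicable. The latter point is precisely what is packaged into the final computation in the proof of Corollary \ref{theorem 7}, so no further work is required beyond invoking it in the right form.
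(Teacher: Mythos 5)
Your proposal is correct and follows essentially the same route as the paper: Corollary \ref{corollary 7} plus Harcharras's condition (\ref{Har}) for the cb$\Lambda(2m)$ half, and Corollary \ref{theorem 7} plus Lemma \ref{necessary} with the choice $1-s>2/(2m+\varepsilon)$ for the negative half. The only difference is that you spell out the $t!(m-t)!\leq m!$ bookkeeping relating the Harcharras count to $r_{N}^{(m)}(\omega)$, which the paper leaves implicit.
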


\begin{proof}
In the notation of (\ref{def r}), Haracharras' condition could be expressed as
\begin{equation*}
E(\omega )\text{ is cb}\Lambda (2m)\text{ if }\sup_{N}r_{N}^{(m)}(\omega )<\infty
\end{equation*}
and we have already seen that $\sup_{N}r_{N}^{(m)}(\omega )$ is finite for a.e. $\omega $ by Cor. \ref{corollary 7}. Also Cor. \ref{theorem 7} shows
that for a.e. $\omega $,
\begin{equation*}
card\left( E(\omega )\bigcap \{\chi _{1},\dots,\chi _{N}\}\right) \gg N^{1-s}
\end{equation*}
when $G=\mathbb{Z}$, $\mathbb{Z(}q^{\infty })$ or $\bigoplus_{0}^{\infty }Z(q)$, and
\begin{equation*}
card\left( E(\omega )\bigcap \{\chi _{1},\dots,\chi _{q_{0}\cdots q_{N}}\}\right) \gg (q_{0}\cdots q_{N})^{1-s}
\end{equation*}
for sufficiently large $N$ when $G=\bigoplus_{0}^{\infty }Z(q_{n})$.

When $G=\mathbb{Z}$, $\{\chi _{1},\dots,\chi _{N}\}$ is an arithmetic progression of length $N$. When $G=\mathbb{Z(}q^{\infty })$ or
$\bigoplus_{0}^{\infty }Z(q)$, $\{\chi _{1},\dots,\chi _{q^{N}-1}\}$ is a subset of a subgroup of cardinality $q^{N}$, and similarly for $G=
\bigoplus_{0}^{\infty }Z(q_{n})$, but with $q^{N}$ replaced by $q_{0}\cdots q_{N}$. In all cases, the choice of $s$ together with Lemma \ref
{necessary} implies that $E(\omega )$ is not $\Lambda (2m+\varepsilon )$ for a.e. $\omega $.
\end{proof}

\begin{theorem}
Let $m\geq 2$ be an integer and $\varepsilon >0$. Every infinite discrete abelian group $G$ contains a set $E$ that is cb$\Lambda (2m)$, but not
$\Lambda (2m+\varepsilon )$.
\end{theorem}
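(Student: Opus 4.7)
The plan is to reduce the general case to the four concrete groups treated in the preceding proposition. I would find a subgroup $H\subseteq G$ isomorphic to one of $\mathbb{Z}$, $\mathbb{Z}(p^{\infty})$, $\bigoplus_{0}^{\infty}\mathbb{Z}(p)$, or $\bigoplus_{0}^{\infty}\mathbb{Z}(p_n)$ (for strictly increasing odd primes $p_n$), construct $E\subseteq H$ via the proposition, and then transfer both the cb$\Lambda(2m)$ property and the failure of $\Lambda(2m+\varepsilon)$ from $H$ up to $G$.

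For the structure-theoretic step, I would split into cases. If $G$ has an element of infinite order, take $H=\mathbb{Z}$. Otherwise $G$ is a torsion group; if some prime $p$ yields an infinite $p$-primary component $G_p$, then (passing to the $p$-socle of $G_p$ when the exponent is bounded, or selecting elements of strictly increasing $p$-power order otherwise) $G$ contains a copy of $\bigoplus_{0}^{\infty}\mathbb{Z}(p)$. Finally, if every $p$-primary component is finite, infinitely many distinct primes must occur as orders of elements; discarding $p=2$ and picking one generator of order $p_n$ for each chosen odd prime $p_n$ produces $\bigoplus_{0}^{\infty}\mathbb{Z}(p_n)\subseteq G$ with $p_n$ strictly increasing odd primes, which is the hypothesis of the previous proposition.

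To transfer the cb$\Lambda(2m)$ property, I would invoke Harcharras' sufficient condition (\ref{Har}): since $E\subseteq H$ and $H$ is a subgroup of $G$, any relation $\chi=\sum_{j=1}^{m}(-1)^{j}\chi_{j}$ with distinct $\chi_{j}\in E$ forces $\chi\in H$, so the supremum over $\chi\in G$ coincides with the supremum over $\chi\in H$, which is finite by the previous proposition. For the failure of $\Lambda(2m+\varepsilon)$ in $G$, I would argue contrapositively via Lemma \ref{necessary}: the sets witnessing the density lower bound in the previous proposition --- namely, the arithmetic progression $\{\chi_1,\dots,\chi_N\}$ when $H=\mathbb{Z}$, or the finite subgroups $\prod_{n=0}^{N}\mathbb{Z}(p_n)$, $\prod_{n=0}^{N}\mathbb{Z}(p)$, and the subgroup of $p^{N+1}$-th roots in $\mathbb{Z}(p^{\infty})$ in the other three cases --- remain arithmetic progressions or finite subgroups when viewed inside $G$. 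Thus the lower bound $\gg N^{1-s}$ with $1-s>2/(2m+\varepsilon)$ contradicts Lemma \ref{necessary} applied inside $G$.

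The main obstacle is the structure-theoretic reduction: one must verify cleanly that in every torsion subcase a subgroup of one of the four standard forms actually sits inside $G$, and that in the multi-prime case the selection can be arranged with odd primes so as to match the hypothesis of the previous proposition. The harmonic-analytic transfer itself is essentially formal once $H$ has been identified, since both (\ref{Har}) and the density bound in Lemma \ref{necessary} are intrinsic conditions that behave well under passage to a subgroup.
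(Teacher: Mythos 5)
Your overall strategy coincides with the paper's: reduce to the four model groups and transfer. The transfer step, however, is carried out differently. The paper argues functional-analytically: an $S_p$-valued $G_0$-polynomial is constant on cosets of the annihilator $G_0^{\bot}$, so its $L^p(X,S_p)$-norm computed over the dual of $G$ equals that computed over the dual of $G_0$, whence $E\subseteq G_0$ is cb$\Lambda(2m)$ (resp.\ $\Lambda(2m+\varepsilon)$) in $G_0$ if and only if it is in $G$. You instead transfer the two \emph{combinatorial} certificates --- Harcharras' condition (\ref{Har}) and the density obstruction of Lemma \ref{necessary} --- observing that both are manifestly subgroup-invariant. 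Your route is sound and arguably more elementary for this specific construction; the paper's is more robust in that it transfers the $\Lambda$-properties themselves rather than the particular sufficient/necessary conditions used to verify them.

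The one genuine defect is in your hand-rolled structure theory, which the paper sidesteps by citing \cite{EHR}. In your torsion analysis, when some $p$-primary component $G_p$ is infinite of unbounded exponent, ``selecting elements of strictly increasing $p$-power order'' does \emph{not} produce a copy of $\bigoplus_{0}^{\infty}\mathbb{Z}(p)$: the group $\mathbb{Z}(p^{\infty})$ itself has elements of every $p$-power order, yet its $p$-socle is a single $\mathbb{Z}(p)$ and it contains no infinite direct sum. Indeed, your case analysis never actually arrives at the alternative $\mathbb{Z}(p^{\infty})$, even though you list it among the targets and it is unavoidable (take $G=\mathbb{Z}(p^{\infty})$). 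The correct dichotomy for an infinite abelian $p$-group is: if the socle is infinite, the group contains $\bigoplus_{0}^{\infty}\mathbb{Z}(p)$; if the socle is finite, the group satisfies the minimum condition and is a direct sum of finitely many cyclic and quasicyclic groups, hence contains $\mathbb{Z}(p^{\infty})$. With that repair (or simply by invoking \cite{EHR}, as the paper does), your argument is complete.
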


\begin{proof}
As observed in \cite{EHR}, any such group $G$ contains a subgroup isomorphic to one of $\mathbb{Z}$, $\mathbb{Z(}q^{\infty })$, $\bigoplus_{0}^{\infty
}Z(q)$ for $q$ prime, or $\bigoplus_{0}^{\infty }Z(q_{n})$ where $q_{n}$ are strictly increasing, odd primes.

Observe that if $G_{0}$ is a subgroup of $G$ and $f$ is a $S_{p}$-valued $G_{0}$-polynomial, then $f$ is constant on the cosets of $G_{0}^{\bot }$, the
annihilator of $G_{0}$. The same is true for $\left\Vert f\right\Vert _{S_{2m}},$ for any integer $m$. It follows from this that if $E\subseteq G_{0}$
is a cb$\Lambda (2m)$ set, then $E$ viewed as a subset of $G$ is also cb$\Lambda (2m)$ and that $E\subseteq G_{0}$ is a $\Lambda (2m+\varepsilon )$ set
if and only if $E$ viewed as a subset of $G$ is $\Lambda (2m+\varepsilon)$.

Hence it suffices to prove the theorem for the four subgroups listed above, and this was done in the previous proposition.
\end{proof}

\end{document}